\newtheorem{theorem}{Theorem}
\newtheorem{lemma}{Lemma}
\newtheorem{assumption}{Assumption}
\newtheorem{corollary}[lemma]{Corollary}
\newtheorem{prop}[lemma]{Proposition}
\DeclareMathOperator{\tr}{tr}
\DeclareMathOperator{\sgn}{sgn}
\DeclareMathOperator{\var}{Var}
\DeclareMathOperator{\spn}{span}
\newcommand{\bd}[1]{\boldsymbol{#1}}
\newcommand{\abs}[1]{\lvert#1\rvert}
\newcommand{\Abs}[1]{\left\lvert#1\right\rvert}
\newcommand{\norm}[1]{\lVert#1\rVert}
\newcommand{\Norm}[1]{\left\lVert#1\right\rVert}
\newcommand{\EE}{\mathbb{E}\,}
\begin{document}

\title[FCIQMC in the lens of inexact power iteration]{The full configuration interaction quantum Monte Carlo method in the lens of inexact power iteration}

\author{Jianfeng Lu} 
\address{Department of Mathematics, Department of
  Physics and Department of Chemistry, Duke University}
\email{jianfeng@math.duke.edu}

\author{Zhe Wang} \address{Department of
  Mathematics, Duke University}
\email{zhe.wang3@duke.edu}

\thanks{This research is supported in part by National Science
  Foundation under award DMS-1454939. We thank useful discussions with
  George Booth, Yingzhou Li, Jonathan Weare, Stephen Wright, and
  Lexing Ying during various stages of the work.}

\begin{abstract}
  In this paper, we propose a general analysis framework for inexact power iteration, which can be used to efficiently solve high dimensional eigenvalue problems arising from quantum many-body problems. Under the proposed framework, we establish the convergence theorems for several recently proposed randomized algorithms, including the full configuration interaction quantum Monte Carlo (FCIQMC) and the fast randomized iteration (FRI). The analysis is consistent with numerical experiments for physical systems such as Hubbard model and small chemical molecules. We also compare the algorithms both in convergence analysis and numerical results.
\end{abstract}

\maketitle

\section{Introduction}
In recent years, following the work of full configuration interaction
quantum Monte Carlo (FCIQMC) \cite{BoothThomAlavi:09,
  ClelandBoothAlavi:10}, the idea of using randomized or truncated
power method to solve the full configuration interaction (FCI)
eigenvalue problem has become quite popular in quantum chemistry
literature. From a mathematical point of view, the FCI calculation
essentially asks for the smallest eigenvalue of a real symmetric
matrix (for ground state calculation) or a few low-lying eigenvalues
(for low-lying excited state calculation). The computational challenge
lies in the fact that the size of the matrix grows exponentially fast
with respect to the number of orbitals / electrons in the chemical
system, and thus a brute-force numerical diagonalization method (such
as power method or Lanczos method) does not work except for very small
molecules.

The goal of this work is two folds: On the one hand, we want to
establish a general framework to understand these recently proposed
randomized algorithms. As we shall see, from the angle of numerical
linear algebra, these recent methods can be understood as
generalizations of conventional power method when inexact
matrix-vector product is used. As a result, the convergence of these
methods can be dealt with by a simple extension of the usual proof of
convergence of power method. A natural consequence of this
understanding is that, to compare the various approaches, the crucial
part is to understand the error caused by different strategies of
inexact matrix-vector multiplication. Using this insights, we will
compare a few of the recently proposed randomized or truncated FCI
methods analytically and also numerically using Hubbard model and some
small chemical molecules as toy examples.

While the motivation of the study is from FCI calculation in quantum
chemistry, these methods can be understood on the general setting of
numerical linear algebra, and hence except in the numerical section,
we will not restrict ourselves to the FCI Hamiltonian.  For a given
real symmetric positive definite matrix $A\in \mathbb{R}^{N\times N}$,
we are interested in numerically obtaining the largest eigenvalue and
corresponding eigenvector. It is possible to extend the method to
leading $k$ eigenvalues where $k$ is on the order of $1$ based on the
subspace iteration method, generalization of the power method.  In the
sequel, we denote the eigenvalues of $A$ as
$\lambda_1 \geq \lambda_2 \geq \cdots \geq \lambda_N \geq 0$, and
corresponding orthonormal eigenvectors are $u_1, u_2, \cdots, u_N$
(viewed as column vectors).

To obtain the largest eigenvalue and the corresponding eigenvector,
one of the simplest algorithm is standard power iteration, given by
\begin{align*}
  & y_{t+1} = Ax_{t}; \\
  & x_{t+1} = y_{t+1}/ \norm{y_{t+1}}_2 
\end{align*}
with some initial guess $x_0$ and iterate till convergence.
The algorithm is simple to understand: The matrix multiplication
$A x_t$ amplifies $x_t$ in the leading eigenspace. The convergence of
the algorithm is also well known: As long as the initial vector
satisfies that $u_1^{\top} x_0$ is nonzero and there exists an
eigengap ($\lambda_1 > \lambda_2$), the subspace $\spn{x_t}$ converges
to the eigenspace $\spn{u_1}$ linearly as $t\to\infty$ with rate
proportional to $\lambda_2 / \lambda_1$.

Since only the convergence of subspace is of interest, the norm of the
vector $x_t$ plays no role. Hence the normalization step of power
iteration may be omitted 
\begin{equation*}
  v_{t+1} = A v_t. 
\end{equation*}
This is equivalent with the original power method. Of course, in
practical computations, the normalization is important to avoid issues
like arithmetic overflow.

\smallskip 

Motivated by the recent proposed algorithms in quantum chemistry
literature, in this work, we take the view point that we cannot afford
(or choose not to perform) the matrix-vector multiplication
$y_{t+1}=Ax_t$ exactly.  Among other applications, such a scenario
naturally arises when the dimension of the matrix $A$ is extremely
large, so that even storage of the vector $y_{t+1}$ (even in sparse
format) is too expensive. For example, this is a common situation for
FCI calculations in quantum chemistry since the dimension of the
matrix $A$ grows exponentially with respect to the number of electrons
in chemical system.

Thus, in power iteration, we would replace the matrix multiplication
step by a map
\begin{equation}  \label{eq:inexact_iteration}
 y_{t+1} = F_m(A, x_t). 
\end{equation}
Here, given the matrix $A$ and the current iterate $x_t$, the map
$F_m$, either deterministic or stochastic, outputs an approximation of
the product $y_{t+1} \approx A x_t$.  Different choices of $F_m$
corresponds to various recently proposed algorithms, as will be
discussed below.  We have used the index $m$ to indicate the
``complexity'' (computational cost) of $F_m$, the specific meaning
depends on the choice of the family of maps.  Replacing the
matrix-vector multiplication by \eqref{eq:inexact_iteration}, we get
Inexact Power Iteration Algorithm~\ref{alg:inexact_power} and its
unnormalized version \ref{alg:inexact_power_no}.
\renewcommand\theContinuedFloat{\alph{ContinuedFloat}}

\begin{algorithm}[H]\ContinuedFloat*
  \SetAlgoLined
  Initialization: Choose a normalized vector $x_0\in \mathbb{R}^N$, $\norm{x_0}_2 = 1$, $u_1^\top x_0 \neq 0$.\\
  \For{$t=0,1,2,\cdots$, while not converged}{
    $y_{t+1} = F_m(A,x_{t})$\; $x_{t+1} = y_{t+1}/ \norm{y_{t+1}}_2$\;
  }
  \caption{Inexact Power Iteration}\label{alg:inexact_power}
\end{algorithm}

\begin{algorithm}[H]\ContinuedFloat
  \SetAlgoLined
  Initialization: Choose a vector $v_0\in \mathbb{R}^{N}$, $u_1^\top v_0 \neq 0$.\\
  \For{$t=0,1,2,\cdots$, while not converged}{
    $v_{t+1} = F_m(A,v_{t})$\;
  }
  \caption{Inexact Power Iteration without Normalization} \label{alg:inexact_power_no}
\end{algorithm}

Notice that the two versions of inexact power iteration Algorithm
\ref{alg:inexact_power} and \ref{alg:inexact_power_no} are equivalent
if the function $F_m(A, \cdot)$ is homogeneous; we will make this as a
standing assumption in our analysis.

Various inexact matrix-vector multiplication has been proposed in the
literature for configuration interaction calculations, either
deterministic or stochastic, see e.g., earlier attempts in
\cite{Huron1973, Buenker1974, Harrison1991, Illas1991, Daudey1993,
  Greer1995, Greer1998, Ivanic2003, Abrams2005}, the FCIQMC approach
\cite{BoothThomAlavi:09, BoothGruneisKresseAlavi:13,
  ClelandBoothAlavi:10, Booth2011, Schwarz2017}, the semi-stochastic
approach \cite{PetruzieloHolmesChanglaniNightingaleUmrigar:12,
  Blunt2015, HolmesChanglaniUmrigar:16, Sharma2017}, other stochastic
approaches \cite{Ten-no2013, Giner2013, LimWeare:17}, and various
deterministic strategies for compressed or truncated representation of
the wave functions \cite{Rolik2008, Roth2009, Ma2011a,
  Evangelista2014, Knowles2015, Zhang2016,
  TubmanLeeTakeshitaHeadGordonWhaley:16, Liu2016, Schriber2016,
  Zimmerman2017, Zimmerman2017a}.  In this work, we will focus on two of
such strategies: the full configuration-interaction quantum Monte
Carlo (FCIQMC) \cite{BoothThomAlavi:09, ClelandBoothAlavi:10} and the
fast randomized iteration (FRI) \cite{LimWeare:17}. In some sense,
these methods represent two ends of the spectrum of the possibilities;
so that the analysis of those can be easily extended to other
methodologies. The FCIQMC uses interacting particles to represent the
vector $v_t$ and stochastic evolution of particles to represent the
action of the matrix $A$ on the vector $v_t$. The FRI on the other
hand is based on exact matrix-vector multiplication and stochastic
schemes to compress the resulting vectors into sparse ones with given
number of nonzero entries. These algorithms will be discussed and
analyzed in Section~\ref{sec:algorithms}, following the general
analysis framework we establish in Section~\ref{sec:conv}.

The rest of the paper is organized as the following. In
Section~\ref{sec:conv}, we provide a convergence analysis for a
generic class of inexact power iteration. In
Section~\ref{sec:algorithms}, we give more details of FCIQMC and FRI
and analyze them following the convergence analysis established in
Section~\ref{sec:conv}. In Section~\ref{sec:numerics}, we perform
numerical tests on 2D Hubbard model and some
chemical molecules to compare the various algorithms and to verify the analysis results. 

\section{General convergence analysis of inexact power iteration}\label{sec:conv}

As an advantage of taking a unified framework of various algorithms,
the convergence of those can be understood in a fairly generic way,
which also facilitates comparison of different proposed strategies.
In this section, we establish a general convergence theorem of the
inexact power iteration.

The convergence of the iteration to the desired eigenvector will be
measured in the angle of the vectors. Recall that the angle between
two vectors $v$ and $w$ is given by
\begin{equation}
  \theta(v,w) = \arccos
  \biggl( \frac{\abs{\langle v, w \rangle}}{\norm{v}_2\norm{w}_2} \biggr).
\end{equation}
From the definition, it is obvious that
$\theta(v,w) = \theta(a v, b w)$ for any vectors $v,w$ and real
numbers $a, b$. In view of this insensitivity of the constant multiple
of vectors in the error measure, if the inexact matrix-vector
multiplication $F_m(A,v_t)$ satisfies the homogeneity assumption
below, the two versions of inexact power iterations with or without
normalization Algorithm \ref{alg:inexact_power} and
\ref{alg:inexact_power_no} are equivalent.

\begin{assumption}[Homogeneity]
  \label{asmp:F_homo}
  \begin{equation}
    F_m(A,cv) = c F_m(A,v),
  \end{equation}
  for all vectors $v\in \mathbb{R}^N$ and real number $c\in \mathbb{R}$.
\end{assumption}

More precisely, if the initial vectors of the two algorithms are the
same up to a number $x_0 = c_0 v_0$, then there exist numbers $c_t$
such that $v_t = c_tx_t$ for all $t$. Therefore,
$\theta(u_1,v_t) = \theta(u_1,x_t)$.  In the following, when we
analyze the algorithm, we will always use $v_t$ for the unnormalized
iterate and $x_t$ for the normalized version
$x_t = v_t / \norm{v_t}_2$.

\smallskip 

To analyze the effect of the inexact matrix-vector multiplication, we
write $F_m(A, v_{t-1})$ as a sum of the exact matrix-vector product
with an error term
\begin{equation}\label{eq:inexactproduct}
  v_{t} = F_m(A,v_{t-1}) = Av_{t-1} + \xi_{t},
\end{equation}
where $\xi_t$ is the error of the inexact multiplication at step $t$,
and the dependence on $m$ is suppressed to keep the notation
simple. Note that $\xi_t$ can be either deterministic or stochastic
depending on the choice of $F_m$. For example, $\xi_t$ is
deterministic for the hard thresholding compression and stochastic for
both FCIQMC and FRI methods. While we will proceed viewing $v_t$ as a
stochastic process, the results apply to the deterministic case as
well.

Denote $\mathcal{F}_t = \sigma(v_1, v_2, \cdots, v_t)$ the
$\sigma$-algebra generated by $v_1, v_2, \cdots, v_t$. We assume that
error $\xi_t$ satisfies the following properties. Note that this
assumption holds for both FCIQMC and FRI algorithms, as we will prove
in Section~\ref{sec:algorithms}.
\begin{assumption}
  \label{asmp:error}
  The error $\xi_t$ in the inexact matrix-vector product
  \eqref{eq:inexactproduct} satisfies
  \begin{enumerate}[a)]
    \item Martingale difference sequence condition
      \begin{equation}
          \label{eq:martingale_difference}
          \EE (\xi_t \mid \mathcal{F}_{t-1}) = 0.
      \end{equation}
    \item Expectation $2$-norm bound
      \begin{equation}
          \label{eq:2_norm_bound}
          \EE (\norm{\xi_t}_2^2 \mid \mathcal{F}_{t-1}) \leq C_e\frac{\norm{A}_1^2 \norm{v_{t-1}}_1^2}{m},
      \end{equation}
      where $C_e$ is a constant that is scale invariant of $A$
      (\textit{i.e.}, it does not depend on the norm of $A$).
    \item Growth of expectation $1$-norm bound
      \begin{equation}
          \label{eq:1_norm_control}
          \EE (\norm{v_t}_1 \mid \mathcal{F}_{t-1}) \leq \norm{A}_1 \norm{v_{t-1}}_1.
      \end{equation}
  \end{enumerate}
\end{assumption}

A few remarks are in order to help appreciate the
Assumption~\ref{asmp:error}.  The martingale difference sequence
property is just assumed here for convenience, in fact the convergence
result extends to the biased case as we will see in
Corollary~\ref{coro:biased}.  The other two assumptions are more
essential.  Assumption~\ref{asmp:error}b indicates that the error of
the inexact matrix-vector product $F_m(A, v_{t-1})$ can be controlled
by the sparsity of the $v_{t-1}$, as the $1$-norm of $v_{t-1}$ is a
sparsity measure. This is a natural assumption considering that the
compression of a vector would be easier if the vector is more
sparse. The bound depends proportional to inverse of $m$, so that one
could control the error of the inexact matrix-vector multiplication at
the price of increasing the complexity. Note that $1/m$ dependence can
be understood as a standard Monte Carlo error scaling. More detailed
discussions can be found in Section~\ref{sec:algorithms} when specific
algorithms are analyzed. Assumption~\ref{asmp:error}c then assumes
that the sparsity is not destroyed by the error in the iteration;
since otherwise we will lose control of the accuracy of the inexact
matrix-vector multiplication.

\medskip 

We now state the convergence theorem for the inexact power iteration
Algorithms \ref{alg:inexact_power} and \ref{alg:inexact_power_no}. The
theorem provides a convergence guarantee with high probability given
that the complexity parameter is sufficiently large with the number of
iteration steps $T$ chosen properly. Note that the logarithmic
dependence of $T$ on the spectral gap $\lambda_1 / \lambda_2$ and the
error criteria $\delta$ and $\varepsilon$ is expected from the
standard power method. The dependence of $m_0$, the complexity
parameter, on the ratio of the $1$-norm and $2$-norm of $A$ is due to
the competition between the $1$- and $2$-norm growth of the iterate,
where the $1$-norm matters for the control of the error of the inexact
matrix-vector product.

\begin{theorem}
  \label{thm:main_conv}
  For the inexact power iteration
  Algorithm~\ref{alg:inexact_power_no}, under
  Assumption~\ref{asmp:error}, 
  for any precision $\varepsilon > 0$ and small probability
  $\delta \in (0,1)$, there exist time
  \begin{equation}\label{eq:choiceT}
    T =  \log (\lambda_1 / \lambda_2)^{-1} \log \left( \frac{2\sqrt{2}}{\sqrt{\delta}\varepsilon\cos\theta(u_1,v_0)} \right) 
  \end{equation}
  and measure of complexity
  \begin{equation}\label{eq:choicem0}
    m_0 = \frac{4C_e}{\delta \varepsilon^2 \bigl(\cos\theta(u_1,v_0)\bigr)^2} \frac{\norm{v_0}_1^2}{\norm{v_0}_2^2} \, T \biggl( \frac{\norm{A}_1}{\norm{A}_2}\biggr)^{2T},
  \end{equation}
  such that with probability at least $1 - 2\delta$, for any $m \geq m_0$, it holds
  \begin{equation}
    \tan\theta(u_1,v_T) \leq \varepsilon
  \end{equation}
  Moreover, if Assumption~\ref{asmp:F_homo} is satisfied, the same
  result holds for Algorithm~\ref{alg:inexact_power}.
\end{theorem}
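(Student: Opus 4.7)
The plan is to prove the theorem for Algorithm~\ref{alg:inexact_power_no} and then transfer it to Algorithm~\ref{alg:inexact_power} via Assumption~\ref{asmp:F_homo}, which (as observed right after that assumption) makes the iterates of the two algorithms agree up to scaling and hence produce the same angle $\theta(u_1,\cdot)$. I would start by unrolling \eqref{eq:inexactproduct},
\[
v_T = A^T v_0 + \sum_{t=1}^T A^{T-t}\xi_t,
\]
and writing $\tan^2\theta(u_1,v_T) = \|P_\perp v_T\|_2^2 / (u_1^\top v_T)^2$ with $P_\perp = I - u_1 u_1^\top$. Since $A$ is symmetric with $u_1$ as its leading eigenvector, $P_\perp$ commutes with $A^k$ and $\|A^k P_\perp\|_2 \leq \lambda_2^k$; consequently the numerator and denominator each split into a deterministic ``signal'' from $A^T v_0$ (of size $\lambda_2^T\|v_0\|_2$ and $\lambda_1^T(u_1^\top v_0)$ respectively) and a stochastic ``noise'' from $\sum_t A^{T-t}\xi_t$.

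For the noise, Assumption~\ref{asmp:error}a says $\{\xi_t\}$ is a martingale difference sequence, so all cross terms in second moments drop out and
\[
\EE \Bigl\| \sum_{t=1}^T P_\perp A^{T-t}\xi_t \Bigr\|_2^2 \leq \sum_{t=1}^T \lambda_2^{2(T-t)}\, \EE\|\xi_t\|_2^2,
\]
together with the analogous estimate along $u_1$ where $\lambda_2$ is replaced by $\lambda_1$. Assumption~\ref{asmp:error}b bounds each $\EE\|\xi_t\|_2^2$ by $C_e\|A\|_1^2\,\EE\|v_{t-1}\|_1^2/m$, while Assumption~\ref{asmp:error}c makes the normalized process $\|v_t\|_1/\|A\|_1^t$ a non-negative supermartingale; Doob's maximal inequality then delivers the bound $\|v_t\|_1 \leq \|A\|_1^t\|v_0\|_1/\delta$ for all $t\leq T$ with probability at least $1-\delta$. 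On this good event the sum $\sum_t \lambda_2^{2(T-t)}\|A\|_1^{2t}$ collapses to at most $T\|A\|_1^{2T}$ (using $\|A\|_1 \geq \|A\|_2 = \lambda_1 \geq \lambda_2$), producing the factor $T(\|A\|_1/\|A\|_2)^{2T}$ in \eqref{eq:choicem0}.

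To extract the final bound I would lower-bound the denominator $|u_1^\top v_T|$ via Chebyshev on the stochastic perturbation $\sum_t \lambda_1^{T-t} u_1^\top \xi_t$ (losing probability $\delta$), then apply Markov to $\|P_\perp v_T\|_2^2$ (losing another $\delta$); the two bad events union-bound to the stated $1-2\delta$. Requiring the \emph{deterministic} part of $\EE\|P_\perp v_T\|_2^2$ to fit inside the Markov budget, which is of order $\delta\varepsilon^2\lambda_1^{2T}(u_1^\top v_0)^2$, forces $(\lambda_2/\lambda_1)^T \lesssim \sqrt{\delta}\,\varepsilon\cos\theta(u_1,v_0)$, which is exactly what produces the $\sqrt{\delta}$ under the logarithm in \eqref{eq:choiceT}; matching the stochastic part against the same budget then fixes $m_0$. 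The main obstacle is the tension between martingale-difference orthogonality (which requires unconditional expectations) and the high-probability $1$-norm control (which lives on a good event), since conditioning on the latter would break the martingale structure. The cleanest workaround is to introduce the stopping time $\tau = \inf\{t : \|v_t\|_1 > \|A\|_1^t\|v_0\|_1/\delta\} \wedge T$ and replace every $\xi_t$ by $\xi_t \mathbf{1}_{\{t\leq\tau\}}$: because $\{t\leq\tau\}\in\mathcal{F}_{t-1}$, this preserves the martingale difference property while giving deterministic control of $\|v_{t-1}\|_1$ on $\{\tau = T\}$. Balancing these pieces recovers the exact expressions for $T$ and $m_0$, and Assumption~\ref{asmp:F_homo} transfers the conclusion to the normalized Algorithm~\ref{alg:inexact_power}.
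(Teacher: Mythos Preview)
Your overall architecture---unroll \eqref{eq:inexactproduct}, exploit the martingale-difference property to kill cross terms, then apply Chebyshev to the denominator $u_1^\top v_T$ and Markov to the numerator $\|P_\perp v_T\|_2^2$---is exactly what the paper does. The paper writes the numerator as $\|v_T\|_2^2-(u_1^\top v_T)^2$ and controls it through the scalar variance $\EE v_T^\top v_T-\EE v_T^\top\EE v_T$ and the matrix covariance $\EE v_Tv_T^\top-\EE v_T\EE v_T^\top$, but this is the same computation you describe.

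The substantive difference is how the noise magnitude is handled. The paper does \emph{not} use Doob's inequality or any stopping time. Instead it invokes Lemma~\ref{lem:main_conv}(c), which asserts the unconditional bound $\EE\|\xi_t\|_2^2\le C_e\|A\|_1^{2t}\|v_0\|_1^2/m$; with this in hand the entire argument runs in expectation and only two tail bounds (Chebyshev and Markov) are needed, giving exactly $1-2\delta$ and the stated $m_0$. Your Doob/stopping-time route is a legitimate alternative, and it is arguably more honest about the gap between Assumption~\ref{asmp:error}(c), which only controls $\EE\|v_{t-1}\|_1$, and the quantity $\EE\|v_{t-1}\|_1^2$ that Assumption~\ref{asmp:error}(b) actually feeds into. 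But it comes at a price: the maximal inequality introduces an extra $\delta^{-2}$ into the noise bound (since on the good event $\|v_{t-1}\|_1^2\le\|A\|_1^{2(t-1)}\|v_0\|_1^2/\delta^2$), and it adds a third bad event, so you would end up with $m_0$ of order $\delta^{-3}$ and success probability $1-3\delta$ rather than the constants in \eqref{eq:choiceT}--\eqref{eq:choicem0}. Your final sentence claims to recover those constants exactly and counts only two bad events; that is not consistent with the argument you sketched.
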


Before proving the theorem, let us collect a few immediate
consequences of Assumption~\ref{asmp:error}. The proof is obvious and
will be omitted.
\begin{lemma}\label{lem:main_conv}
  If the error $\xi_t$ satisfies Assumption~\ref{asmp:error}, we
  have
  \begin{enumerate}[a)]
  \item The error is unbiased
    \begin{equation}
      \EE \xi_t = 0.
    \end{equation}
  \item The error at different step is uncorrelated, in particular
    \begin{equation}
      \EE\xi_t^{\top} A^r \xi_s = 0
    \end{equation}
    for any $t \neq s$ and for all non-negative integer $r$.
  \item The expected $2$-norm of the error can be controlled as
    \begin{equation}
      \label{eq:2_norm_control}
      \EE \norm{\xi_t}_2^2 \leq C_e \norm{A}_1^{2t} \frac{\norm{v_0}_1^2}{m}.
    \end{equation}
  \end{enumerate}
\end{lemma}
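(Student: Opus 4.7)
The plan is to derive all three items from Assumption~\ref{asmp:error} by standard manipulations of conditional expectation, essentially the tower property combined with the fact that past noise is measurable with respect to the current filtration.

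For (a), a single application of the tower property to \eqref{eq:martingale_difference} yields
\begin{equation*}
\EE \xi_t = \EE\bigl[\EE(\xi_t \mid \mathcal{F}_{t-1})\bigr] = 0.
\end{equation*}
For (b), by the symmetry $\xi_t^\top A^r \xi_s = \xi_s^\top A^r \xi_t$ one may assume without loss of generality that $s < t$. Since $\xi_s = v_s - A v_{s-1}$ is $\mathcal{F}_s$-measurable and $\mathcal{F}_s \subseteq \mathcal{F}_{t-1}$, the vector $A^r \xi_s$ may be pulled outside the inner conditional expectation, giving
\begin{equation*}
\EE[\xi_t^\top A^r \xi_s] = \EE\bigl[(\EE[\xi_t \mid \mathcal{F}_{t-1}])^\top A^r \xi_s\bigr] = 0
\end{equation*}
by \eqref{eq:martingale_difference}.

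For (c), the plan is to start from the conditional second-moment bound \eqref{eq:2_norm_bound} and take an unconditional expectation, obtaining $\EE\norm{\xi_t}_2^2 \leq (C_e \norm{A}_1^2/m)\, \EE\norm{v_{t-1}}_1^2$. It then suffices to prove $\EE\norm{v_{t-1}}_1^2 \leq \norm{A}_1^{2(t-1)} \norm{v_0}_1^2$, which I would establish by iterating \eqref{eq:1_norm_control} step by step, picking up a factor of $\norm{A}_1^2$ at each iteration via the relation $v_t = A v_{t-1} + \xi_t$ together with the tower property. Substituting this back yields the advertised bound $\EE\norm{\xi_t}_2^2 \leq C_e \norm{A}_1^{2t} \norm{v_0}_1^2/m$. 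The only mildly delicate point, which is presumably why the authors describe the proof as obvious, is the passage from the first-moment growth condition \eqref{eq:1_norm_control} to second-moment control of $\norm{v_t}_1^2$; this is handled by the same filtration argument used in (a) and (b), and in the concrete FCIQMC and FRI instantiations of Section~\ref{sec:algorithms} the corresponding second-moment bound holds directly.
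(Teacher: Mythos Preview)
Your arguments for parts (a) and (b) are correct and are exactly the standard tower-property manipulations one would expect; the paper itself omits the proof as ``obvious,'' so there is nothing further to compare.

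For part (c), you correctly reduce the claim to the inequality $\EE\norm{v_{t-1}}_1^2 \leq \norm{A}_1^{2(t-1)}\norm{v_0}_1^2$, and you correctly flag the passage from the first-moment growth condition \eqref{eq:1_norm_control} to second-moment control as the delicate step. But your proposed resolution---``handled by the same filtration argument used in (a) and (b)''---does not work. The tower property yields $\EE\norm{v_t}_1^2 = \EE\bigl[\EE(\norm{v_t}_1^2 \mid \mathcal{F}_{t-1})\bigr]$, whereas Assumption~\ref{asmp:error}(c) bounds only $\EE(\norm{v_t}_1 \mid \mathcal{F}_{t-1})$; Jensen's inequality relates these in the wrong direction. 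A concrete obstruction: with $\norm{A}_1 = 1$ and $\norm{v_{t-1}}_1 = 1$ deterministic, a random $\norm{v_t}_1$ taking values $0$ and $K$ with probabilities $1-1/K$ and $1/K$ satisfies \eqref{eq:1_norm_control} yet has $\EE\norm{v_t}_1^2 = K$, which can be made arbitrarily large. So \eqref{eq:2_norm_control} does \emph{not} follow from Assumption~\ref{asmp:error} as stated; this is in fact a small gap in the paper itself.

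Your fallback observation is the right repair. For FRI with systematic sampling the paper shows $\norm{\Phi_m(v)}_1 = \norm{v}_1$ almost surely, hence $\norm{v_t}_1 \leq \norm{A}_1^t \norm{v_0}_1$ almost surely and the squared bound is immediate. For FCIQMC and FRI with independent Bernoulli, however, the paper verifies only the first-moment version of \eqref{eq:1_norm_control}; to close the argument one must separately establish a conditional second-moment growth bound of the form $\EE(\norm{v_t}_1^2 \mid \mathcal{F}_{t-1}) \leq C\norm{A}_1^2 \norm{v_{t-1}}_1^2$. This is doable with the same per-particle independence calculations used in Section~\ref{sec:algorithms} (possibly at the cost of enlarging the constant $C_e$), but it is additional work, not a consequence of the filtration argument you invoke.
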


\smallskip 

\begin{proof}[Proof of Theorem~\ref{thm:main_conv}]
  From the iteration of Algorithm \ref{alg:inexact_power_no}, we obtain
  \begin{align*}
    v_t &= Av_{t-1} + \xi_t \\
        &= A^t v_0 + A^{t-1} \xi_1 + \cdots + A\xi_{t-1} + \xi_t.
  \end{align*}
  Since the error $\xi_t$ is unbiased, we have
  \begin{equation*}
    \EE v_t = A^t v_0.
  \end{equation*}
  We now control the variance of $v_t$. Since $\xi_t$ is uncorrelated,
  we have
  \begin{equation*}
    \EE v_t^{\top}v_t = v_0^{\top}A^{2t}v_0 + \EE \xi_1^{\top} A^{2t-2} \xi_1 + \cdots \EE \xi_{t-1}^{\top} A^2 \xi_{t-1} + \EE \xi_t^{\top}\xi_t.
  \end{equation*}
  Thus,
  \begin{equation*}
    \EE v_t^{\top}v_t - \EE v_t^{\top}\EE v_t = \EE \xi_1^{\top} A^{2t-2} \xi_1 + \cdots \EE \xi_{t-1}^{\top} A^2 \xi_{t-1} + \EE \xi_t^{\top}\xi_t.
  \end{equation*}
  Using Lemma~\ref{lem:main_conv}, we estimate
  \begin{align*}
    \abs{\EE v_t^{\top}v_t - \EE v_t^{\top}\EE v_t} &= \abs{\EE \xi_1^{\top} A^{2t-2} \xi_1 + \cdots + \EE \xi_{t-1}^{\top} A^2 \xi_{t-1} + \EE \xi_t^{\top}\xi_t} \cr
    &\leq \EE\abs{\xi_1^{\top} A^{2t-2} \xi_1} + \cdots + \EE\abs{ \xi_{t-1}^{\top} A^2 \xi_{t-1}} + \EE\abs{\xi_t^{\top}\xi_t} \cr
    &\leq \lambda_1^{2t-2} \EE\abs{\xi_1^{\top}\xi_1} + \cdots + \lambda_1^2 \EE\abs{ \xi_{t-1}^{\top}\xi_{t-1}} + \EE\abs{\xi_t^{\top}\xi_t} \cr
    &\leq C_e \frac{\norm{A}_1^2 \norm{v_0}_1^2}{m} \Bigl(\lambda_1^{2t-2} + \cdots + \lambda_1^2 \norm{A}_1^{2t-4} + \norm{A}_1^{2t-2}\Bigr) \cr
    & = C_e \frac{\norm{A}_1^2 \norm{v_0}_1^2}{m} 
      \frac{\norm{A}_1^{2t} - \norm{A}_2^{2t}}{\norm{A}_1^{2} - \norm{A}_2^{2}},
  \end{align*}
  where in the last step, we used the fact that
  $\lambda_1 = \norm{A}_2$ since $\lambda_1$ is the largest
  eigenvalue. Recall that for symmetric matrix,
  $\norm{A}_1 \geq \norm{A}_2$, hence we have 
  \begin{equation}\label{eq:varest1}
    \abs{\EE v_t^{\top}v_t - \EE v_t^{\top}\EE v_t} 
    \leq     C_e \frac{\norm{v_0}_1^2}{m} \; t \norm{A}_1^{2t}. 
  \end{equation}
  By analogous arguments for 
  \begin{equation*}
    \EE v_tv_t^{\top} - \EE v_t\EE v_t^{\top} = \EE A^{t-1} \xi_1 \xi_1^{\top} A^{t-1} + \cdots + \EE A \xi_{t-1} \xi_{t-1}^{\top} A + \EE \xi_t\xi_t^{\top},
  \end{equation*}
  we get 
  \begin{equation}\label{eq:varest2}
    \bigl\lVert \EE v_tv_t^{\top} - \EE v_t\EE v_t^{\top} \bigr\rVert_2 \leq C_e \frac{ \norm{v_0}_1^2}{m} \; t  \norm{A}_1^{2t}.
  \end{equation}

  Let us now estimate the angle between $v_t$ and $u_1$ -- the
  eigenvector associated with the largest eigenvalue.  
  By definition, 
  \begin{equation}
    \label{eq:tan}
    \bigl( \tan\theta(u_1,v_t) \bigr)^2 = \frac{\norm{v_t}_2^2 - (u_1^{\top} v_t)^2 }{(u_1^{\top} v_t)^2}.
  \end{equation}
  For the denominator, we know the expectation
  \begin{equation*}
    \EE u_1^{\top} v_t = u_1^{\top} A^t v_0 
    = \lambda_1^t (u_1^{\top} v_0 ),
  \end{equation*}
  and the variance 
  \begin{align*}
    \var(u_1^{\top} v_t) &= u_1^{\top} (\EE v_tv_t^{\top} - \EE v_t\EE v_t^{\top}) u_1 \cr
    &\leq \norm{\EE v_tv_t^{\top} - \EE v_t\EE v_t^{\top}}_2 
      \stackrel{\eqref{eq:varest2}}{\leq} C_e \frac{\norm{v_0}_1^2}{m}\; t  \norm{A}_1^{2t}.
    \end{align*}
    The Chebyshev inequality implies that
    \begin{equation*}
      \mathbb{P}\biggl(\Abs{u_1^{\top}v_t - \lambda_1^t u_1^{\top}v_0} \geq \sqrt{\frac{C_e t}{m\delta}}  \norm{v_0}_1 \norm{A}_1^t\biggr)  \leq \delta,
    \end{equation*}
    and hence, as
    $\abs{\lambda_1^t u_1^{\top}v_0} - \abs{u_1^{\top}v_t} \leq
    \abs{u_1^{\top}v_t - \lambda_1^t u_1^{\top}v_0}$,
    \begin{equation*}
      \mathbb{P} \biggl( \Abs{u_1^{\top}v_t} \leq \Abs{\lambda_1^t u_1^{\top}v_0} - \sqrt{\frac{C_e t}{m\delta}} \norm{v_0}_1 \norm{A}_1^{t} \biggr)  \leq \delta, 
    \end{equation*}
    or equivalently 
    \begin{equation*}
      \mathbb{P} \biggl( \Abs{u_1^{\top}v_t}^2  \leq \biggl(\Abs{\lambda_1^t u_1^{\top}v_0} - \sqrt{\frac{C_e t}{m\delta}} \norm{v_0}_1 \norm{A}_1^{t}\biggr)^2 \biggr)  \leq \delta, 
    \end{equation*}
    For the numerator of \eqref{eq:tan}, the expectation is
    \begin{align*}
      \EE \bigl( \norm{v_t}_2^2 - (u_1^{\top} v_t)^2\bigr)  &= \sum_{i=2}^N u_i^{\top} \EE v_tv_t^{\top} u_i \\
        &= \sum_{i=2}^N u_i^{\top} (\EE v_tv_t^{\top} - \EE v_t \EE v_t^{\top}) u_i + \sum_{i=2}^N (u_i^{\top} \EE v_t)^2 \\
        &\leq \tr(\EE v_tv_t^{\top} - \EE v_t \EE v_t^{\top}) + \lambda_2^{2t} \norm{v_0}_2^2 \\
        &= (\EE v_t^{\top}v_t - \EE v_t^{\top}\EE v_t) + \lambda_2^{2t} \norm{v_0}_2^2 \\
                                                            &\stackrel{\eqref{eq:varest1}}{\leq} C_e \frac{ \norm{v_0}_1^2}{m} \; t  \norm{A}_1^{2t} + \lambda_2^{2t} \norm{v_0}_2^2.
    \end{align*}
    By Markov inequality, for any $\delta \in (0,1)$
    \begin{equation*}
      \mathbb{P} \left( \norm{v_t}_2^2 - (u_1^{\top} v_t)^2 \geq \frac{1}{\delta} \Bigl( C_e \frac{ \norm{v_0}_1^2}{m} \; t  \norm{A}_1^{2t} + \lambda_2^{2t} \norm{v_0}_2^2 \Bigr)\right) \leq \delta.
    \end{equation*}
    Therefore,
    \begin{equation}
      \mathbb{P} \left(
      \bigl( \tan \theta(u_1, v_t) \bigr)^2 
      \leq \frac{1}{\delta} \frac{  C_e \frac{ \norm{v_0}_1^2}{m} \; t  \norm{A}_1^{2t} + \lambda_2^{2t} \norm{v_0}_2^2  }{ \Bigl( 
        \Abs{\lambda_1^t u_1^{\top}v_0} - \sqrt{\frac{C_e t}{m\delta}} \norm{v_0}_1 \norm{A}_1^{t}  \Bigr)^2}
      \right) \geq 1-2\delta.
    \end{equation}
    We can explicitly check then with the choices \eqref{eq:choiceT} and \eqref{eq:choicem0}, for $m \geq m_0$, we have 
    \begin{equation*}
      \mathbb{P} \bigl(\tan\theta(u_1, v_T) \leq \varepsilon\bigr) \geq 1-2\delta,
    \end{equation*}
    thus the claim of the theorem.
\end{proof}

As mentioned above, it is possible to drop the martingale difference
sequence condition in Assumption~\ref{asmp:error} and get a similar
result. The reason is that the second moment bound
\eqref{eq:2_norm_bound} can be used to control the bias of $\xi_t$.
We state this as the following theorem.
\begin{theorem}\label{coro:biased}
  For the inexact power iteration Algorithm~\ref{alg:inexact_power_no}, under the Assumptions~\ref{asmp:error}(b) and \ref{asmp:error}(c), for any precision $\varepsilon>0$ and small probability $\delta \in (0,1)$, there exist time
  \begin{equation}
    T =  \log (\lambda_1 / \lambda_2)^{-1} \log \left( \frac{4}{\sqrt{\delta}\varepsilon\cos\theta(u_1,v_0)} \right) 
  \end{equation}
  and measure of complexity
  \begin{equation}
    m_0 = \frac{8C_e}{\delta \varepsilon^2 \bigl(\cos\theta(u_1,v_0)\bigr)^2} \frac{\norm{v_0}_1^2}{\norm{v_0}_2^2} \, T^2 \biggl( \frac{\norm{A}_1}{\norm{A}_2}\biggr)^{2T},
  \end{equation}
  such that with probability at least $1 - 2\delta$, for any $m \geq m_0$, it holds
  \begin{equation}
    \tan\theta(u_1,v_T) \leq \varepsilon
  \end{equation}
  Moreover, if Assumption~\ref{asmp:F_homo} is satisfied, the same
  result holds for Algorithm~\ref{alg:inexact_power}.
\end{theorem}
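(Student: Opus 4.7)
The plan is to reduce the biased case to the martingale case by splitting each error into its conditional mean and a martingale-difference remainder. Write $b_t = \EE[\xi_t \mid \mathcal{F}_{t-1}]$ and $\tilde{\xi}_t = \xi_t - b_t$. By Jensen's inequality applied to Assumption~\ref{asmp:error}(b),
\begin{equation*}
  \norm{b_t}_2^2 \leq \EE\bigl[\norm{\xi_t}_2^2 \mid \mathcal{F}_{t-1}\bigr] \leq C_e \frac{\norm{A}_1^2 \norm{v_{t-1}}_1^2}{m},
\end{equation*}
and $\tilde{\xi}_t$ inherits the same $2$-norm bound up to a universal constant while automatically satisfying \eqref{eq:martingale_difference}. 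Assumption~\ref{asmp:error}(c) is untouched by the decomposition.

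Unrolling the recursion gives $v_t = A^t v_0 + \sum_{s=1}^{t} A^{t-s} b_s + \sum_{s=1}^{t} A^{t-s} \tilde{\xi}_s$. The martingale-difference term is handled verbatim by the computations in the proof of Theorem~\ref{thm:main_conv} and contributes a variance of order $C_e \norm{v_0}_1^2 \, t \, \norm{A}_1^{2t}/m$ to both $u_1^{\top} v_t$ and $\norm{v_t}_2^2 - (u_1^{\top} v_t)^2$. The bias term is the genuinely new ingredient: combining the pointwise bound on $\norm{b_s}_2$ with the iterated Assumption~\ref{asmp:error}(c) (which yields $\EE \norm{v_{s-1}}_1 \leq \norm{A}_1^{s-1}\norm{v_0}_1$) and $\norm{A}_2 \leq \norm{A}_1$ gives
\begin{equation*}
  \EE \Bigl\lVert \sum_{s=1}^{t} A^{t-s} b_s \Bigr\rVert_2 \leq \sqrt{C_e/m}\; \norm{v_0}_1 \sum_{s=1}^{t} \norm{A}_2^{t-s}\, \norm{A}_1^{s} \leq t \sqrt{C_e/m}\; \norm{v_0}_1 \norm{A}_1^{t}.
\end{equation*}
Squaring, this contributes an additional $t^2 C_e \norm{v_0}_1^2 \norm{A}_1^{2t}/m$ to the deterministic bound on the numerator of $(\tan\theta(u_1,v_t))^2$, and the same quantity controls how far $\abs{u_1^{\top} v_t}$ can drift from $\abs{\lambda_1^{t} u_1^{\top} v_0}$ in the denominator.

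The remaining steps mirror the endgame of Theorem~\ref{thm:main_conv}: apply Chebyshev to the denominator and Markov to the numerator of $(\tan\theta(u_1,v_t))^2$, take a union bound for the two events, and verify by direct substitution that the stated $T$ and $m_0$ produce $\tan\theta(u_1,v_T) \leq \varepsilon$ with probability at least $1-2\delta$. The extra factor of $T$ appearing in $m_0$ (relative to \eqref{eq:choicem0}) is precisely accounted for by the bias entering additively at scale $t$ rather than the martingale scale $\sqrt{t}$, which becomes $t^2$ versus $t$ after squaring. The main technical subtlety is that the pointwise bound on $\norm{b_s}_2$ involves the random $1$-norm $\norm{v_{s-1}}_1$; one has to pass to expectations carefully using the tower property and Jensen's inequality so that only the first-moment control provided by Assumption~\ref{asmp:error}(c) is invoked, avoiding a second-moment bound on $\norm{v_{s-1}}_1$ that is not directly available from the hypotheses.
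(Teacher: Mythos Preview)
Your decomposition $\xi_t = b_t + \tilde{\xi}_t$ into conditional bias plus martingale remainder is a genuinely different route from the paper's. The paper does \emph{not} split: it writes $u_1^\top v_t - \lambda_1^t u_1^\top v_0 = \sum_{s=1}^t \lambda_1^{t-s} u_1^\top \xi_s$, squares, and bounds \emph{every} cross term $\EE\, u_1^\top \xi_i \xi_j^\top u_1$ by $(\EE\norm{\xi_i}_2^2\,\EE\norm{\xi_j}_2^2)^{1/2}$ via Cauchy--Schwarz together with the iterated bound $\EE\norm{\xi_s}_2^2 \leq C_e\norm{A}_1^{2s}\norm{v_0}_1^2/m$ of Lemma~\ref{lem:main_conv}(c). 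The double sum over $i,j$ produces the factor $t^2$ in a single step, and the numerator $\sum_{i\geq 2}(u_i^\top v_t)^2$ is treated identically. This is shorter because there is only one object to control, whereas your split creates three pieces ($A^t v_0$, $B_t = \sum_s A^{t-s}b_s$, $M_t = \sum_s A^{t-s}\tilde{\xi}_s$) and all their pairwise cross terms.

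One step in your sketch is not justified as written. Going from the first-moment bound $\EE\norm{B_t}_2 \leq t\sqrt{C_e/m}\,\norm{v_0}_1\norm{A}_1^{t}$ to a contribution of order $t^2 C_e \norm{v_0}_1^2\norm{A}_1^{2t}/m$ in the numerator requires control of $\EE\norm{B_t}_2^2$, since the expectation of $\sum_{i\geq 2}(u_i^\top v_t)^2$ involves second moments of $B_t$, not the square of its first moment; ``squaring'' the estimate on $\EE\norm{B_t}_2$ does not give this. You would also have to dispatch the cross term $\EE\langle B_t, M_t\rangle$, which does not vanish because $b_s$ depends on $\tilde{\xi}_r$ for $r<s$. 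Both issues can be fixed---e.g.\ by applying the same Cauchy--Schwarz maneuver to $\EE\norm{B_t}_2^2$ and to the cross term, feeding on $\EE\norm{\xi_s}_2^2$---but once you do that you are invoking exactly the same second-moment information as the paper, and the bias/martingale split no longer buys anything over the paper's direct argument. Your closing remark that only first-moment control on $\norm{v_{s-1}}_1$ is needed is therefore not borne out by the details.
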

\begin{proof} Note that 
    \begin{align*}
        u_1^\top v_t &= u_1^\top A^t v_0 + u_1^\top A^{t-1} \xi_1 + \cdots + u_1^\top A \xi_{t-1} + u_1^\top \xi_t \\
        &= \lambda_1^t u_1^\top v_0 + \lambda_1^{t-1} u_1^\top \xi_1 + \cdots + \lambda_1 u_1^\top \xi_{t-1} + u_1^\top \xi_t, 
    \end{align*}
    so we get
    \begin{align*}
      \EE(u_1^\top v_t - \lambda_1^t u_1^\top v_0)^2 &= \EE(\lambda_1^{t-1} u_1^\top \xi_1 + \cdots + \lambda_1 u_1^\top \xi_{t-1} + u_1^\top \xi_t)^2 \cr
      &= \sum_{i,j=1}^t \lambda_1^{2t-i-j} \EE u_1^\top \xi_i \xi_j^\top u_1 \cr
      &\leq \sum_{i,j=1}^t \lambda_1^{2t-i-j} \bigl(\EE\norm{\xi_i}_2^2 \EE\norm{\xi_j}_2^2\bigr)^{1/2} \cr
      &\leq C_e t^2 \norm{A}_1^{2t} \frac{\norm{v_0}_1^2}{m}.
    \end{align*}
    Moreover,
    \begin{align*}
      \EE \bigl( \norm{v_t}_2^2 - (u_1^{\top} v_t)^2\bigr)^2 &= \sum_{i=2}^N (u_i^\top A^t v_0)^2 + 2\sum_{i=2}^N\sum_{b=1}^t \EE u_i^\top A^t v_0 \xi_b^\top A^{t-b} u_i + \sum_{i=2}^N \sum_{a=1}^t\sum_{b=1}^t \EE u_i^\top A^{t-a}\xi_a\xi_b^\top A^{t-b} u_i \\
                                        &\leq \lambda_2^{2t} \norm{v_0}_2^2 + 2t \sqrt{C_e} \lambda_2^t \norm{v_0}_2 \norm{A}_1^t \frac{\norm{v_0}_1}{\sqrt{m}} + t^2 C_e \norm{A}_1^{2t} \frac{\norm{v_0}_1^2}{m} \\
                                        & \leq 2 \lambda_2^{2t} \norm{v_0}_2^2 + 2 t^2 C_e \norm{A}_1^{2t} \frac{\norm{v_0}_1^2}{m},
    \end{align*}
    where the Cauchy-Schwarz inequality is used in the last line.
    Thus we can again use the Markov inequality to bound both
    numerator and denominator on the right hand side of \eqref{eq:tan}
    to obtain the claimed result.
\end{proof}

\section{Algorithms}\label{sec:algorithms}

In this section, we will review two stochastic power iteration methods
recently proposed in the literature: full configuration-interaction
quantum Monte Carlo (FCIQMC) \cite{BoothThomAlavi:09} and fast
randomized iteration (FRI) \cite{LimWeare:17}.  They can be analyzed
in the same framework we established in the previous section. In
particular, we prove the convergence of the two algorithms using
Theorem \ref{thm:main_conv}. We focus on these two methods since in
some sense they represent two opposite ends of strategies inexact
matrix-vector multiplications. It is possible to combine the ideas and
get a zoo of different approaches, which possibly yield better
results; and our analysis can be extended to these as well. We will
also comment on two variants: \textit{i}FCIQMC and hard thresholding (HT),
closely related to the FCIQMC and FRI approaches.  

Without loss of generality, we will assume the matrix $A$ is close to
the identity matrix and thus the eigenvalues $\lambda_i$ are close to
$1$ (we can always scale and center the original matrix so that this
is true).

\subsection{Full configuration-interaction quantum Monte Carlo}

\subsubsection{Algorithm Description}
FCIQMC is an algorithm originated in quantum chemistry literature to
calculate the ground energy of a many-body electron system by a Monte
Carlo algorithm for the full configuration-interaction of the
many-body Hamiltonian \cite{BoothThomAlavi:09}.

Let the Hamiltonian be a real symmetric matrix
$H \in \mathbb{R}^{N\times N}$ under the Slater determinant basis. To
find the ground state (the smallest eigenvector) of $H$, we write
$A = I - \delta H$ for $\delta > 0$ sufficiently small and hence focus
on the largest eigenvalue of $A$; this can be viewed as a first order
truncation of the Taylor series of $e^{-\delta H}$. It is also
possible to construct other variants of $A$ from $H$, which we will
not go here.

The FCIQMC can be viewed as a stochastic inexact power iteration for
finding the largest eigenvector of $A$, which corresponds more
naturally to the unnormalized version of the inexact power iteration
(Algorithm~\ref{alg:inexact_power_no}).  In the algorithm, the vector
$v_t$ is not stored as a vector, but represented as a collection of
``signed particles'' $\{\alpha_t^{(i)}\}_{i=1}^{M_t}$, where $M_t$ is
the number of signed particles at iteration step $t$.  Each signed
particle $\alpha$ has two attributes: location
$l_\alpha \in \{1,2,\cdots,N\}$ and sign $s_\alpha \in \{1,-1\}$.
Denote $e_l \in \mathbb{R}^{N}$ the standard basis vector with value
$1$ at its $l$-th component and $0$ at every other component.  Then
each signed particle $\alpha$ represents a signed unit vector
$\alpha = s_\alpha e_{l_\alpha}$. The vector $v_t \in \mathbb{Z}^N$ is
given by the sum of all signed particles at time $t$:
\begin{equation}\label{eq:vectorsum}
  v_t = \sum_{i=1}^{M_t} \alpha_t^{(i)}.
\end{equation}
With some ambiguity of notation, we refer to both the set of particles
and the corresponding vector as $v_t$, connected by
\eqref{eq:vectorsum}. As we always assume that the particles with
opposite signs on the same location are annihilated (see the
annihilation step in the algorithm description below), the vector
$v_t$ uniquely determines the set of particles.

    
In FCIQMC, the inexact matrix-vector multiplication $F_m(A,v_t)$
consists of three steps of particle evolution: spawning, diagonal
death/cloning and annihilation. Write $A = A_d + A_o$ with $A_d$ the
diagonal part and $A_o$ the off-diagonal part. The spawning step
approximates $A_o v_t$; the diagonal death / cloning step approximates
$A_d v_t$, and the annihilation step sums up the results from the
previous two steps and approximates the summation
$A v_t = A_o v_t + A_d v_t$. The three steps will be described in more
details below.

\smallskip

\noindent \textbf{Spawning.} Each signed particle $\alpha$ (we
suppress the index of $\alpha_t^{(i)}$ to simplify notation) is
allowed to spawn a child particle to another location, corresponding
to a nonzero component of
$A_o\alpha = s_{\alpha} A_o(:,l_\alpha)$.\footnote{MATLAB notation
  $A(:, l)$ is used to denote the $l$-th column of $A$.}  The location
of spawning is chosen at random, with probability
$p_{\text{loc}}(l \mid l_{\alpha})$, which is chosen in the original
FCIQMC algorithm to be uniformly random over all nonzero components of
$A_o\alpha$ for some simple Hamiltonian $H$. In
  general, $p_{\text{loc}}(\cdot \mid l_{\alpha})$ can be more
  complicated; we refer readers to \cite{BoothThomAlavi:09} for more
  details.  In the following of the paper,
  $p_{\text{loc}}(\cdot \mid l_{\alpha})$ is assumed to be uniform
  distribution over all nonzero components of $A_o\alpha$, while our
  analysis can be extended to other choice of
  $p_{\text{loc}}(\cdot \mid l_{\alpha})$.

Once the location $l$ is chosen, $n$ (possibly $0$) children particles
are spawned with the same sign $s = \sgn(A_o(l, l_\alpha) s_\alpha)$ determined by the sign of vector entry
$(A_o\alpha)(l)$ and the particle $\alpha$. The location $l$ and number $n$ are stochastically
chosen such that the overall step gives an unbiased estimate of
$A_o\alpha$:
\begin{equation}
  \label{eq:fciqmc_spawn_unbiased}
  \EE (n s e_l \mid \alpha) = A_o\alpha.
\end{equation}
Please refer Algorithm~\ref{alg:fciqmc_spawn} for details.

\renewcommand\theContinuedFloat{\alph{ContinuedFloat}}
\begin{algorithm}[H]\ContinuedFloat*
  \SetAlgoLined
  \SetKwInOut{Input}{Input}
  \SetKwInOut{Output}{Output}

  \Input{Set of particles: $\{ \alpha^{(i)}\}_{i = 1, \ldots, M}$,
    matrix $A$}

  \Output{New set of particles after spawning:
    $\{\alpha^{(j),\, \text{sp}}\}_{j = 1, \ldots, M^{\text{sp}}}$} 

  $M^{\text{sp}} = 0$;

  \For{each particle $\alpha\in\{\alpha^{(i)}\}_{i=1, \ldots, M}$}{

    Select a spawning location $l$ with probability
    $p_{\text{loc}}(l\mid l_\alpha)$;
    
    Determine the expected number of children
    \begin{equation*}
      Q = \frac{\abs{A_o(l,l_\alpha)}}{p_{\text{loc}}(l\mid
        l_\alpha)};
    \end{equation*}
      
    Randomly choose the number of children
    \begin{equation*}
      n = 
      \begin{cases}
        \lfloor Q \rfloor  & \text{w.p.}\quad 1  - (Q - \lfloor Q \rfloor) \\
        \lfloor Q \rfloor + 1 & \text{w.p.}\quad Q - \lfloor Q
        \rfloor
      \end{cases}
    \end{equation*}
      
    Assign sign of each children as
    $s = \sgn(A_o(l, l_\alpha) s_\alpha) = \sgn\bigl((A_o \alpha)(l)\bigr)$;

    Increase the number of particles
    $M^{\text{sp}} = M^{\text{sp}} + n$;

    Add $n$ particles with
    location $l$ and sign $s$ into the spawning set
    $\{\alpha^{(j), \text{sp}}\}$.  
  }
      
  \caption{FCIQMC - Spawning} \label{alg:fciqmc_spawn}
\end{algorithm}

\smallskip 
\noindent\textbf{Diagonal cloning / death.} 
This step represents $A_d v_t$ as a collection of particles in an
analogous way to the spawning step. For every signed particle
$\alpha$, we would consider children particles on the location
$l_{\alpha}$ (i.e., the location of the new particles is chosen to be
$l_{\alpha}$) and obtain an unbiased representation
\begin{equation}
  \label{eq:diagonal_unbiased}
  \EE (n s e_{l_\alpha} \mid \alpha) = A_d\alpha.
\end{equation}
The details can be found in Algorithm~\ref{alg:fciqmc_diagonal_death},
the key steps are similar to Algorithm~\ref{alg:fciqmc_spawn}.

\begin{algorithm}[H]\ContinuedFloat
  \SetKwInOut{Input}{Input}
  \SetKwInOut{Output}{Output}

  \Input{Set of particles: $\{ \alpha^{(i)}\}_{i = 1, \ldots, M}$,
    matrix $A$}

  \Output{New set of particles after diagonal cloning / death: 
    $\{\alpha^{(j),\, \text{diag}}\}_{j = 1, \ldots, M^{\text{diag}}}$} 

  $M^{\text{diag}} = 0$;

  \For{each particle $\alpha\in\{\alpha^{(i)}\}_{i=1, \ldots, M}$}{

     Determine the expected number of children at $l_{\alpha}$
     \begin{equation*}
       Q = \abs{A(l_\alpha,l_\alpha)},
     \end{equation*}
        
     Randomly choose the number of children
     \begin{equation*}
       n = 
       \begin{cases} 
         \lfloor Q \rfloor & \text{w.p.}  \quad 1  - (Q - \lfloor Q \rfloor) \\
         \lfloor Q \rfloor + 1 & \text{w.p.} \quad Q - \lfloor Q
         \rfloor
       \end{cases}
     \end{equation*}
     
     Assign sign of each children as $s = \sgn\bigl((A_d \alpha)(l_{\alpha})\bigr)$;

     Increase the number of particles $M^{\text{diag}} = M^{\text{diag}} + n$; 

     Add $n$ particles with location $l_{\alpha}$ and sign $s$ into the set
     $\{\alpha^{(j), \text{diag}}\}$.
  }
  \caption{FCIQMC - Diagonal cloning / death} \label{alg:fciqmc_diagonal_death}
\end{algorithm}

\smallskip 
\noindent\textbf{Annihilation.}     
The annihilation step merges the children particles from the previous
two steps and remove all pairs of particles with the same location and
opposite signs. If we denote $v^{\text{sp}}$ and $v^{\text{diag}}$ the corresponding vector representation of the particles are the spawning and diagonal cloning / death steps, the annihilation steps create a collection of particles representing the new vector $v = v^{\text{sp}} + v^{\text{diag}}$. 
Applying the three steps above to the particles representing $v_t$, we obtain the new set of particles $v_{t+1}$ at time $t+1$. Since by construction
\begin{equation*}
  \EE ( v^{\text{sp}} \mid v_t)  = A_o v_t, \quad \text{and} \quad 
  \EE ( v^{\text{diag}} \mid v_t) = A_d v_t,
\end{equation*}
we have on expectation 
\begin{equation}
  \EE ( v_{t+1} \mid v_t) = A v_t.
\end{equation}
In terms of the notations used in the framework of inexact power
iteration, $v_{t+1}$ represented using particles can be viewed as the
approximate matrix-vector product $F_m(A, v_t)$:
\begin{equation}
  \label{eq:fciqmc_inexact_power_iteration}
  F_m(A, v_t) := v_{t+1} = \sum_{i=1}^{M_{t+1}} \alpha_{t+1}^{(i)} = Av_t + \xi_{t+1},
\end{equation}
where $\xi_{t+1}$ is introduced in the last equality to denote the
error from the approximate matrix-vector multiplication through the
stochastic particle representation. As we will show in the analysis
below, the accuracy of FCIQMC iteration is controlled by the number of
particles $M_t$; and thus it plays the role of the complexity
parameter $m$ in our general framework. We would drop the subscript
$m$ for $F_m$ in the sequel for FCIQMC, as the complexity parameter is
implicit.

\smallskip 

Now that we have defined the inexact matrix-vector multiplication
$F(A,v_t)$ in FCIQMC, we may apply this in the inexact power iteration
as Algorithm~\ref{alg:inexact_power_no}. However, this can be
problematic in practice. Recall that $A = I - \delta H$ is assumed
to be a perturbation of identity so its eigenvalue is around $1$.  If
the largest eigenvalue of $A$ is strictly larger than $1$, when the
signed particles become a good approximation to the leading
eigenvector, the number of particles $M_t$ will grow exponentially
with rate $\lambda_1$, which quickly increases the computational cost
and memory requirement. It is also possible (while the probability is
tiny) that the number of particles may decrease to $0$ due to the
randomness.

In practice, it is desirable to have controls on the number of
particles to make the algorithm more stable.  One such strategy is to
introduce a shift $s_t \in \mathbb{R}$ and use matrix
\begin{equation}\label{eq:dynshift}
  \widetilde{A} = A + \delta s_t I = I - \delta ( H - s_t I)
\end{equation}
instead of $A$ at the $t$-th
step. Notice that $s_t$ only shifts the eigenvalues while not changing
the eigenspace. The shift $s_t$ is adjusted dynamically to control the
number of particles. With such shifts, the full FCIQMC algorithm is
presented in Algorithm~\ref{alg:fciqmc}.

\addtocounter{algocf}{-1}
\begin{algorithm}[H]
  \SetAlgoLined
  Initialization: $t=0$ and set initial particles $v_0$. \\

  \While{$M_t \leq M^{\text{target}}$, the target population}{
    \tcp{Phase 1: FCIQMC with fixed shift $s_0$} Spawning step: Use
    algorithm~\ref{alg:fciqmc_spawn} with $v_t$ and $A + \delta s_0 I$
    to get particle set $v^{\text{sp}}$;

    Diagonal death / cloning step: Use
    algorithm~\ref{alg:fciqmc_diagonal_death} with $v_t$ and
    $A + \delta s_0 I$ to get particle set $v^{\text{diag}}$;

    Annihilation step to get the particle set of the next time step
    $v_{t+1} = v^{\text{sp}} + v^{\text{diag}}$;

    Update $M_t$ and set $t=t+1$; }

  Set $s_t = s_0$; \quad \tcp{Initialize the dynamic shift} 

   \While{$t<t_{\max}$}{
     \tcp{Phase 2: FCIQMC with dynamic shift $s_t$}

     Spawning step: Use algorithm~\ref{alg:fciqmc_spawn} with $v_t$ and $A + \delta s_t I$ to get particle set $v^{\text{sp}}$; 
    
     Diagonal death / cloning step: Use
     algorithm~\ref{alg:fciqmc_diagonal_death} with $v_t$ and
     $A + \delta s_t I$ to
     get particle set $v^{\text{diag}}$;
    
     Annihilation step: Merge the two set of particles 
     $v_{t+1} = v^{\text{sp}} + v^{\text{diag}}$;

    Update the shift $s_t$ as 
    \begin{equation}\label{eq:adjust_shift}
       s_t = 
       \begin{cases}
         \displaystyle
         s_{t-q} - \frac{\eta}{q} \bigl( \ln M_t - \ln M_{t-q}\bigr), & \text{if} \; t = 0\ (\mathrm{mod}\ q), \\
         s_{t-1}, & \text{otherwise};
       \end{cases}
     \end{equation}
    
    Update $M_t$ and set $t=t+1$; 
}
\caption{FCIQMC} \label{alg:fciqmc}
\end{algorithm}

The Algorithm~\ref{alg:fciqmc} contains two phases for different
strategies of choosing the shifts and thus controlling the particle
population. In Phase 1, the shift is fixed to be $s_0$, which is
chosen such that $\abs{A(i,i) - s_0} \geq 1$ for all $i$ so that the
particle number is most likely to grow exponentially till the target
population $M^{\text{target}}$. In the second phase, the shift is
dynamically adjusted, so to control the growth of the population by a
negative feedback loop.  The target number of population
$M^{\text{target}}$ is chosen to be sufficiently large that the
variance is small enough to ensure convergence. It plays the role as
the `complexity' $m$ in Theorem~\ref{thm:main_conv}. $\eta$ and $q$ are two parameters to control the fluctuation of number of particles. For the details
of the parameter choices, we refer the readers to the original paper
on FCIQMC \cite{BoothThomAlavi:09} for details.

\smallskip 
\noindent \textbf{Energy Estimator}.  Several estimators can be used
to estimate the smallest eigenvalue of $H$ based on the FCIQMC
Algorithm~\ref{alg:fciqmc}, which is just a linear transformation of
the largest eigenvalue $\lambda_1$ of $A$.  One estimator is simply
the shift $s_t$. When the algorithm converges, $v_t$ is approximately
proportional to the eigenvector $u_1$. Since $s_t$ is adjusted to
control the number of particles steady, the largest eigenvalue of
$A + \delta s_t I$ is approximately $1$, hence connecting $s_t$ with
the desired eigenvalue estimate, cf.~\eqref{eq:dynshift}.  The other
estimator we will consider is the projected energy estimator
\begin{equation*}
  E_t = \frac{v_*^{\top} Hv_t}{v_*^{\top} v_t}.
\end{equation*}
Here $v_*$ is some fixed vector, for example the Hartree-Fock state of
the system. It is clear that when $v_t$ becomes a good approximation
of the eigenvector $u_1$, $E_t$ gives a good estimate of the leading
eigenvalue. In the numerical examples, we will focus on the projected
energy estimator, since it can be applied to all algorithms we
consider in this work (while shift estimator is unique for FCIQMC, in practice, it gives similar results compared to the projected energy estimator).

\subsubsection{Convergence Analysis}

Since FCIQMC can be viewed as an inexact power iteration as in
\eqref{eq:fciqmc_inexact_power_iteration}, we apply
Theorem~\ref{thm:main_conv} to analyze the convergence of FCIQMC. For
simplicity, we will focus on the case that the shift is constantly
$0$, $s_t = 0$, since the shift does not affect the eigenvector which
is the main focus of Theorem~\ref{thm:main_conv}. The probability distribution
in the spawning step $p_{\textrm{loc}}(\cdot\mid l_\alpha)$ is assumed 
to be uniform distribution over all the 
nonzero entries of $A_o(:,l_\alpha)$. To avoid some
degenerate case, we will assume that each diagonal entry of $A$ is
non-zero and each column of $A$ has more than $2$ nonzero entries (so
there is at least one possible location for children particles in the
spawning step).

We now check the three conditions in Assumption~\ref{asmp:error}. The
unbiasedness is guaranteed by construction as discussed above for the
FCIQMC algorithm, we have
\begin{equation}
  \EE (v_{t+1} \mid \mathcal{F}_t) = A v_t, 
\end{equation}
or equivalently, the error $\xi_t$ is a martingale difference
sequence:
\begin{equation}
  \EE (\xi_{t+1} \mid \mathcal{F}_t) = 0.
\end{equation}
The expectation $2$-norm bound is established in the following
proposition.
\begin{prop}
  For the inexact matrix-vector multiplication
  \eqref{eq:fciqmc_inexact_power_iteration} in FCIQMC
  Algorithm~\ref{alg:fciqmc}, the error $\xi_t$ satisfies
  \begin{equation}
    \EE \bigl( \Norm{\xi_{t+1}}_2^2 \mid \mathcal{F}_t\bigr) \leq \left(\max_{1\leq k \leq n}(\|a_k\|_0  - 2)\|a_{o, k}\|_2^2 + \frac{1}{2} \right) \frac{\Norm{v_t}_1^2}{M_t},
  \end{equation}
  where $a_k = A(:,k)$ is the $k$-th column vector of $A$, and
  $a_{o, k}$ is the $k$-th column vector of $A_o$, thus $a_{o, k}$
  equals $a_k$ except for the $k$-th entry $a_{o, k}(k) = 0$.
\end{prop}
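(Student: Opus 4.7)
The plan is to decompose $\xi_{t+1}$ into per-particle contributions from the spawning and diagonal steps, exploit their conditional independence, and bound each contribution by a direct second-moment calculation. For each particle $\alpha_t^{(i)} = s_i e_{l_i}$ in the set representing $v_t$, let $X_i^{\text{sp}}$ and $X_i^{\text{diag}}$ denote the (random) vector outputs of Algorithm~\ref{alg:fciqmc_spawn} and Algorithm~\ref{alg:fciqmc_diagonal_death} applied to that particle, so that $v_{t+1} = \sum_{i=1}^{M_t}(X_i^{\text{sp}} + X_i^{\text{diag}})$ and
$$\xi_{t+1} = \sum_{i=1}^{M_t} (Y_i^{\text{sp}} + Y_i^{\text{diag}}), \qquad Y_i^{\text{sp}} := X_i^{\text{sp}} - A_o \alpha_t^{(i)}, \quad Y_i^{\text{diag}} := X_i^{\text{diag}} - A_d \alpha_t^{(i)}.$$
The unbiasedness relations \eqref{eq:fciqmc_spawn_unbiased} and \eqref{eq:diagonal_unbiased} give $\EE(Y_i^{\text{sp}} \mid \mathcal{F}_t) = \EE(Y_i^{\text{diag}} \mid \mathcal{F}_t) = 0$, and since the $2M_t$ random choices use fresh randomness drawn independently per particle and per step, all cross terms in $\norm{\xi_{t+1}}_2^2$ vanish in expectation, leaving
$$\EE\bigl(\norm{\xi_{t+1}}_2^2 \mid \mathcal{F}_t\bigr) = \sum_{i=1}^{M_t} \EE\bigl(\norm{Y_i^{\text{sp}}}_2^2 \mid \mathcal{F}_t\bigr) + \sum_{i=1}^{M_t} \EE\bigl(\norm{Y_i^{\text{diag}}}_2^2 \mid \mathcal{F}_t\bigr).$$

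For the spawning term, $X_i^{\text{sp}}$ is a single basis vector $s e_l$ with random integer multiplicity $n$, so $\norm{X_i^{\text{sp}}}_2^2 = n^2$. Conditioning on the sampled location $l$, the child count $n$ is the stochastic rounding of $Q_l = \abs{A_o(l,l_i)}/p_{\text{loc}}(l\mid l_i)$ with variance at most $1/4$, so $\EE(n^2 \mid l, \mathcal{F}_t) \leq Q_l^2 + 1/4$. Since $p_{\text{loc}}(\cdot\mid l_i) = 1/(\norm{a_{l_i}}_0 - 1)$ is uniform over the $\norm{a_{l_i}}_0 - 1$ nonzero off-diagonal entries of column $l_i$, averaging over $l$ gives
$$\EE\bigl(\norm{X_i^{\text{sp}}}_2^2 \mid \mathcal{F}_t\bigr) \leq \sum_l \frac{\abs{A_o(l,l_i)}^2}{p_{\text{loc}}(l\mid l_i)} + \frac{1}{4} = (\norm{a_{l_i}}_0 - 1)\norm{a_{o,l_i}}_2^2 + \frac{1}{4},$$
and subtracting $\norm{A_o\alpha_t^{(i)}}_2^2 = \norm{a_{o,l_i}}_2^2$ yields $\EE(\norm{Y_i^{\text{sp}}}_2^2 \mid \mathcal{F}_t) \leq (\norm{a_{l_i}}_0 - 2)\norm{a_{o,l_i}}_2^2 + 1/4$. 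The diagonal step is simpler: $X_i^{\text{diag}}$ is supported at $l_i$ with integer multiplicity whose variance is at most $1/4$, hence $\EE(\norm{Y_i^{\text{diag}}}_2^2 \mid \mathcal{F}_t) \leq 1/4$.

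Summing these per-particle bounds and bounding every column-dependent factor by its maximum over $k$ yields $\EE(\norm{\xi_{t+1}}_2^2 \mid \mathcal{F}_t) \leq M_t\bigl(\max_k (\norm{a_k}_0 - 2)\norm{a_{o,k}}_2^2 + 1/2\bigr)$. To reach the stated form one uses that after the annihilation step at time $t$, no two particles at the same site carry opposite signs, so each location $l$ contributes exactly $\abs{v_t(l)}$ particles; hence $M_t = \norm{v_t}_1$ and equivalently $M_t = \norm{v_t}_1^2/M_t$. The main obstacle is the spawning-variance calculation: one must cleanly separate the two layers of randomness (location sampling under $p_{\text{loc}}$ versus stochastic rounding) and recognize that the uniform sampling over the off-diagonal support of a single column is precisely what produces the sparsity factor $\norm{a_k}_0 - 1$, one copy of which is absorbed upon subtracting $\norm{A_o\alpha_t^{(i)}}_2^2$ to give the final factor $\norm{a_k}_0 - 2$.
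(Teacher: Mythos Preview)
Your proof is correct and follows essentially the same approach as the paper: decompose $\xi_{t+1}$ into independent, mean-zero per-particle spawning and diagonal contributions, bound each via the stochastic-rounding variance $\leq 1/4$ and the uniform location-sampling formula, and finish with the identification $M_t=\norm{v_t}_1$ after annihilation. The only cosmetic difference is that you compute the spawning variance via $\EE\norm{X}_2^2-\norm{\EE X}_2^2$ whereas the paper expands $\EE\norm{F(A_o,e_k)-A_oe_k}_2^2$ directly; both yield the identical bound $(\norm{a_k}_0-2)\norm{a_{o,k}}_2^2+\tfrac14$.
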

    
\begin{proof}
  Since each particle evolves independently,
  \begin{equation*}
    F(A,v_t) = F\Bigl(A,\sum_{i=1}^{M_t} \alpha_t^{(i)}\Bigr) = \sum_{i=1}^{M_t} F(A,\alpha_t^{(i)}).
  \end{equation*}
  Moreover $F(A,\alpha_t^{(i)})$ and $F(A,\alpha_t^{(j)})$ are
  independent for $i\neq j$ conditioned on $\mathcal{F}_t$.

  By construction, $F(A,\alpha_t^{(i)})$ is unbiased, \textit{i.e.},
  \begin{equation*}
    \EE \bigl(F(A,\alpha_t^{(i)}) - A\alpha_t^{(i)} \mid \mathcal{F}_t\bigr) = 0.
  \end{equation*}
  Therefore,
  \begin{align*}
    \EE ( \Norm{\xi_{t+1}}_2^2 \mid\mathcal{F}_t) &= \EE \biggl( \biggl\lVert \sum_{i=1}^{M_t} (F(A, \alpha_t^{(i)}) - A\alpha_t^{(i)}) \biggr\rVert_2^2 \mid\mathcal{F}_t \biggr) \cr
    &= \EE \biggl( \Bigl(\sum_{i=1}^{M_t} F(A,\alpha_t^{(i)}) - A\alpha_t^{(i)}\Bigr)^{\top} \Bigl(\sum_{j=1}^{M_t} F(A,\alpha_t^{(j)}) - A\alpha_t^{(j)}\Bigr) \mid\mathcal{F}_t\biggr) \cr
    &= \sum_{i=1}^{M_t} \EE \bigl( (F(A,\alpha_t^{(i)}) - A\alpha_t^{(i)})^{\top} (F(A,\alpha_t^{(i)}) - A\alpha_t^{(i)}) \mid\mathcal{F}_t\bigr) \cr
    &\qquad + 2 \sum_{1\leq i < j \leq M_t} \EE \bigl( (F(A,\alpha_t^{(i)}) - A\alpha_t^{(i)})^{\top} \mid\mathcal{F}_t\bigr)\; \EE \bigl( (F(A,\alpha_t^{(j)}) - A\alpha_t^{(j)}) \mid\mathcal{F}_t\bigr) \cr
    &= \sum_{i=1}^{M_t} \EE \Bigl(\bigl\lVert F(A,\alpha_t^{(i)}) - A\alpha_t^{(i)} \bigr\rVert_2^2 \mid\mathcal{F}_t\Bigr).
  \end{align*}
  Hence, it suffices to consider each particle individually. To
  simplify the notation, without loss of generality, let us consider a
  particle with $\alpha_t^{(i)} = e_k$ for some $k$. Since the
  spawning and diagonal cloning/death steps are independent and
  unbiased, we have the decomposition
  \begin{equation*}
    \EE ( \Norm{ F(A,e_k) - A e_k}_2^2 \mid\mathcal{F}_t) = \EE ( \Norm{ F(A_o,e_k) - A_o e_k}_2^2 \mid\mathcal{F}_t) + \EE ( \Norm{ F(A_d,e_k) - A_d e_k}_2^2 \mid \mathcal{F}_t).
  \end{equation*}
  
  For the spawning step, since $A_o e_k = a_{o, k}$, there are
  $\norm{a_{o, k}}_0$ locations to spawn. Remind that 
  $p_{\textrm{loc}}(\cdot\mid k)$ is assumed to be uniform distribution,
  so each location is chosen with probability $\frac{1}{\norm{a_{o,k}}_0}$.
  Following the Algorithm~\ref{alg:fciqmc_spawn}, we calculate that
  \begin{equation*}
    F(A_o, e_k) = 
    \begin{cases}
      \lfloor\Norm{a_{o,k}}_0 |a_k(j)|\rfloor \sgn(a_k(j)) e_j, &
      \text{w.p.} \quad \bigl(1 - \bigl( \Norm{a_{o,k}}_0
      |a_k(j)| - \lfloor \Norm{a_{o,k}}_0 |a_k(j)|\rfloor \bigr) \bigr) / \Norm{a_{o,k}}_0,  \\
      \bigl(\lfloor\Norm{a_{o,k}}_0 |a_k(j)|\rfloor + 1\bigr)
      \sgn(a_k(j)) e_j, & \text{w.p.} \quad \bigl( \Norm{a_{o,k}}_0
      |a_k(j)| - \lfloor \Norm{a_{o,k}}_0 |a_k(j)|\rfloor \bigr) /
      \Norm{a_{o,k}}_0,
    \end{cases}
  \end{equation*}
  for each $j$ such that $a_{o,k}(j) \neq 0$. Straightforward calculation yields
  \begin{align*}
    \EE \Norm{F(A_o, e_k) - A_o e_k}_2^2
    & = (\Norm{a_{o,k}}_0 - 1) \Norm{a_{o,k}}_2^2  \cr
    & \qquad + \frac{1}{\Norm{a_{o,k}}_0} \sum_{j,\, a_{o,k}(j) \neq 0} \bigl(\Norm{a_{o,k}}_0 a_k(j) - \lfloor 
\Norm{a_{o,k}}_0 a_k(j) \rfloor\bigr) \times \cr 
      & \hspace{10em} \times \Bigl(1 - \bigl(\Norm{a_{o,k}}_0 a_k(j) - \lfloor 
\Norm{a_{o,k}}_0 a_k(j) \rfloor\bigr)  \Bigr)  \cr
    &\leq (\Norm{a_{o,k}}_0 - 1) \Norm{a_{o,k}}_2^2 + \frac{1}{4}.
  \end{align*}
  
  For the diagonal cloning/death step, we have
  \begin{equation*}
    F(A_d, e_k) = 
    \begin{cases}
      \lfloor |a_k(k)| \rfloor \sgn(a_k(k)) e_k, & \text{w.p.}
      \quad 1 - \bigl( \abs{a_k(i)} - \lfloor \abs{a_k(i)} \rfloor \bigr); \\
      (\lfloor|a_k(k)| \rfloor + 1) \sgn(a_k(k)) e_k, & \text{w.p.}
      \quad \abs{a_k(i)} - \lfloor \abs{a_k(i)} \rfloor.
    \end{cases}
  \end{equation*}
  Therefore
  \begin{equation*}
    \EE ( \Norm{F(A_d,e_k) - A_d e_k }_2^2 \mid\mathcal{F}_t) = 
    \bigl( \abs{a_k(i)} - \lfloor \abs{a_k(i)} \rfloor \bigr) \Bigl(1 -
\bigl( \abs{a_k(i)} - \lfloor \abs{a_k(i)} \rfloor \bigr) \Bigr)  \leq \frac{1}{4}.
  \end{equation*}
  Summing up the contribution from the two steps, we arrive at
  \begin{equation*}
    \EE ( \norm{F(A, e_k) - A e_k}_2^2 \mid\mathcal{F}_t) \leq (\norm{a_k}_0 - 2)\Norm{a_{o,k}}_2^2 + \frac{1}{2},
  \end{equation*}
  where we used $\norm{a_{o, k}}_0 = \norm{a_k}_0 - 1$. Thus
  \begin{equation*}
    \EE ( \norm{F(A, v_t) - A v_t}_2^2 \mid\mathcal{F}_t) \leq M_t \Bigl(\max_{1\leq k\leq n} (\norm{a_k}_0 - 2)\Norm{a_{o,k}}_2^2 + \frac{1}{2}\Bigr).
  \end{equation*}
  Since $M_t = \norm{v_t}_1$, we can rewrite the above estimate as
  \begin{equation*}
    \EE (\norm{F(A, v_t) - A v_t}_2^2 \mid\mathcal{F}_t) \leq \frac{\norm{v_t}_1^2}{M_t} \Bigl(\max_{1\leq k\leq n} (\|a_k\|_0 - 2)\|a_{o,k}\|_2^2 + \frac{1}{2}\Bigr).
  \end{equation*}
\end{proof}

Here we emphasize the important role of the annihilation step in
FCIQMC reflected in the error analysis above. Only with the
annihilation step is $M_t = \norm{v_t}_1$ true, so that the growth of
error is controlled as in the last step of the proof. In general,
without annihilation, the error will be exponentially larger as
$\frac{M_t}{\norm{v_t}_1}$ grows exponentially even when $v_t$ is
close to the eigenvector $u_1$.  Suppose $v_t$ is approximately
$u_1$. Then $v_{t+1} \approx \lambda_1 v_t$. Therefore,
$\norm{v_{t+1}}_1 \approx \norm{A}_2 \norm{v_t}_1$. However for the
number of particles $M_t$ without annihilation,
$M_{t+1} \approx \norm{\abs{A}}_2 M_t$, where $\abs{A}$ is the
entry-wise absolute value of $A$. To see this, let us denote $v_t^+$
the vector represented by all the particles with positive sign and
$-v_t^-$ the vector represented by all the particles with negative
sign. Then $v_t = v_t^+ - v_t^-$. Denote
$\tilde{v}_t = v_t^+ + v_t^-$. Then $M_t = \norm{\tilde{v}_t}_1$
without annihilation. We can easily check that $\tilde{v}_t$ evolves
according to $\tilde{v}_{t+1} = \abs{A} \tilde{v}_t$. So finally,
$\tilde{v}_t$ will converge to the eigenvector of $\abs{A}$, and
$M_{t+1} \approx \norm{\abs{A}}_2 M_t$. Noticing that
$\norm{A}_2 \leq \norm{\abs{A}}_2 \leq \norm{A}_1$, we know
$\frac{M_t}{\norm{v_t}_1}$ grows exponentially at rate
$\frac{\norm{\abs{A}}_2}{\norm{A}_2}$ after convergence. Therefore if
the number of particles $M_t$ has an upper bound, which is always true
in practice due to computational resource constraint, $\norm{v_t}_1$ will decay
to zero exponentially, which means the algorithm will not converge to
the correct eigenvector.
Also comment that if the spawning distribution $p_{\text{loc}}(\cdot \mid l_{\alpha})$ is not exactly uniform distribution, then $\EE ( \Norm{ F(A_o,e_k) - A_o e_k}_2^2 \mid\mathcal{F}_t)$ will be bound by another constant depending on $A_o$. Therefore the bound of $\EE \bigl( \Norm{\xi_{t+1}}_2^2 \mid \mathcal{F}_t\bigr)$ in the Proposition will only differ by a constant multiplier.

Compared with Assumption~\ref{asmp:error}, we observe that the
particle number $M_t$ plays the role of the ``complexity''
parameter. The more particles we have, the smaller the error is.  We
have the following corollary assuming the particle number is bounded
from below by $m$
\begin{corollary}
  If the particle number satisfies $M_t \geq m$,
  \begin{equation}
    \EE (\norm{F(A, v_t) - A v_t}_2^2 \mid\mathcal{F}_t, M_t \geq m) \leq C_e \frac{\norm{A}_1^2 \norm{v_t}_1^2}{m},
  \end{equation}
  where
  $C_e = \dfrac{\max_{k} (\|a_k\|_0 - 2)\|a_{o,k}\|_2^2 +
    \frac{1}{2}}{\norm{A}_1^2}$ is a parameter scale-invariant of $A$.
\end{corollary}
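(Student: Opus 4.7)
The plan is to derive this corollary as an essentially immediate rearrangement of the preceding proposition. First I would recall the proposition's bound
\begin{equation*}
    \EE \bigl( \Norm{\xi_{t+1}}_2^2 \mid \mathcal{F}_t\bigr) \leq \left(\max_{1\leq k \leq n}(\|a_k\|_0 - 2)\|a_{o, k}\|_2^2 + \tfrac{1}{2} \right) \frac{\Norm{v_t}_1^2}{M_t},
\end{equation*}
which, since the right-hand side is a monotone decreasing function of $M_t$ for fixed $v_t$, gives a pathwise upper bound that is $\mathcal{F}_t$-measurable (noting $M_t = \|v_t\|_1$ is already $\mathcal{F}_t$-measurable). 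Thus conditioning further on the $\mathcal{F}_t$-measurable event $\{M_t \geq m\}$ only tightens the bound, replacing $1/M_t$ by $1/m$.

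Once that replacement is made, the last step is cosmetic: multiply and divide the right-hand side by $\|A\|_1^2$ to rewrite it as $C_e \|A\|_1^2 \|v_t\|_1^2 / m$, with $C_e$ exactly the constant defined in the statement. This matches the desired form and puts the bound in alignment with Assumption~\ref{asmp:error}(b) from the general framework.

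The remaining item to verify is the scale-invariance of $C_e$. Under the rescaling $A \mapsto \kappa A$, the sparsity pattern is preserved so $\|a_k\|_0$ is unchanged, while $\|a_{o,k}\|_2^2$ and $\|A\|_1^2$ both pick up a factor of $\kappa^2$. Hence the dominant term $\max_k (\|a_k\|_0 - 2)\|a_{o,k}\|_2^2 / \|A\|_1^2$ is genuinely scale-invariant. The residual $\tfrac{1}{2}/\|A\|_1^2$ from the diagonal step is not strictly scale invariant, but in the setting of the paper where $A = I - \delta H$ has $\|A\|_1 = O(1)$ it contributes only a bounded additive constant and may be absorbed into $C_e$. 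No essential obstacle arises; the corollary is really a packaging of the proposition into the notation used by Theorem~\ref{thm:main_conv}.
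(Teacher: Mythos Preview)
Your proposal is correct and matches the paper's treatment: the corollary is stated there without proof, as an immediate repackaging of the proposition via the replacement $1/M_t \le 1/m$ and the trivial multiply-and-divide by $\|A\|_1^2$. Your aside about the $\tfrac{1}{2}/\|A\|_1^2$ term not being strictly scale-invariant is a fair observation about a minor imprecision in the paper's wording, but it does not affect the argument.
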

 In summary, FCIQMC satisfies Assumption \ref{asmp:error}b, as long as
the particle number is not too small. Note that in practice the particle number can be controlled by the dynamic shift $s_t$ to ensure that it does not drop below the required lower bound.
    
\smallskip 

The Assumption~\ref{asmp:error}c, the growth of expectation $1$-norm
bound, can also be checked easily, since we have
\begin{align*}
  \EE (\norm{v_{t+1}}_1 \mid \mathcal{F}_t) &= \EE \Bigl(\biggl\lVert F(A, \sum_{i=1}^{M_t} \alpha_t^{(i)}) \biggr\rVert_1 \mid \mathcal{F}_t\Bigr) 
                                              = \EE \Bigl( \biggl\lVert\sum_{i=1}^{M_t} F(A,\alpha_t^{(i)}) \biggr\rVert_1 \mid \mathcal{F}_t\Bigr) \cr
  &\leq \sum_{i=1}^{M_t} \EE (\norm{F(A,\alpha_t^{(i)})}_1 \mid \mathcal{F}_t) 
    = \sum_{i=1}^{M_t}  \norm{A\alpha_t^{(i)}}_1 
    \leq \sum_{i=1}^{M_t} \norm{A}_1 \norm{\alpha_t^{(i)}}_1 
    = \norm{A}_1 \norm{v_t}_1.
\end{align*}
    
In conclusion, we have verified the assumptions of
Theorem~\ref{thm:main_conv}, and thus it can be applied for the
convergence and error analysis of FCIQMC.

\subsubsection{Remarks on \textit{i}FCIQMC} 

\textit{i}FCIQMC (initiator FCIQMC) \cite{ClelandBoothAlavi:10} is a modified
version of FCIQMC. It can be viewed as a bias-variance tradeoff
strategy to reduce the computational cost and error of the FCIQMC
approach, by restricting the spawning step.
  
The $n$ locations are divided into two sets: the initiators $L_i$ and
non-initiators $L_n$ with $L_i \cap L_n = \emptyset$,
$L_i \cup L_n = \{1,2,\cdots, N\}$. The rule of \textit{i}FCIQMC is that for
any particle $\alpha$ at a non-initiator location $l_\alpha \in L_n$,
it is only allowed to spawn children particles at locations already
occupied by some other particles. If $\alpha$ spawns particles to a
location unoccupied, then the children particles are discarded. An
exception rule is that if at least two particles at non-initiator
locations spawn children particles with the same sign at one
unoccupied location, then the children particles are kept. There are
no restrictions for spawning steps for particles in initiators.  In
the case that all the locations are initiators $L_n = \emptyset$,
\textit{i}FCIQMC reduces to FCIQMC.

The initiators $L_i$ are chosen at the beginning according to some
prior knowledge. The initiators are then updated at each step of
iteration. Suppose $n_{i,thre}\in\mathbb{N}$ is a fixed threshold. As
soon as the number of particles at a non-initiator location is greater
than the threshold $n_{i,thre}$, then the location becomes an
initiator.
Intuitively, initiators are more important locations for the
eigenvector since they are occupied by many particles. The
restrictions on the spawning ability of non-initiators reduce the
computational cost and the variance of the inexact matrix-vector
product while only introducing small bias since there are few
particles on non-initiators. Therefore, \textit{i}FCIQMC can be viewed as a variance control technique for FCIQMC.

\subsection{Fast Randomized Iteration}

In this section, we provide a numerical analysis based on our general
framework for the convergence of the fast randomized iteration (FRI),
recently proposed in the applied mathematics literature
\cite{LimWeare:17}, inspired by FCIQMC type algorithms. The basic idea
of the FRI method is to first apply the matrix $A$ on the vector of
current iterate, and then employ a stochastic compression algorithm to
reduce the resulting vector to a sparse representation.  The original
convergence analysis \cite{LimWeare:17} uses a norm motivated by
viewing the vectors as random measures. In comparison, as we have seen
in the proof of Theorem~\ref{thm:main_conv}, our viewpoint and
analysis is closer in spirit to numerical linear algebra, in
particular the standard convergence analysis of power method.

\subsubsection{Algorithm Description}
The fast randomized iteration (FRI) algorithm is based on a choice of
random compression function $\Phi_m : \mathbb{R}^N \to \mathbb{R}^N$,
which maps a full vector $v$ to a sparse vector $\Phi_m(v)$ with
approximately only $m$ nonzero components. The sparsity of $\Phi_m(v)$
reduces the storage cost of the vector and associated computational
cost. To combine FRI with the inexact power iteration, define
\begin{equation}
  \label{eq:fri_inexact}
  F_m(A, v_t) = \Phi_m(A v_t)
\end{equation}
in Algorithm \ref{alg:inexact_power} and
\ref{alg:inexact_power_no}. The error is
$\xi_{t+1} = \Phi_m(A v_t) - A v_t$.
    
Thus the FRI algorithm is completely characterized by the choice of
compression function $\Phi_m$, about which we assume the following
properties. These are adaptations of the
Assumptions~\ref{asmp:F_homo} and \ref{asmp:error} in the context of
a compression function.
\begin{assumption}
  \label{asmp:fri_error}
  For any vector $v \in \mathbb{R}^N$, the compression function
  $\Phi_m$ satisfies:
  \begin{enumerate}[a)]
  \item Homogeneity: For all $c \in \mathbb{R}$, 
    \begin{equation}
      \Phi_m(cv) = c\Phi_m(v);
    \end{equation}
  \item Unbiasedness
    \begin{equation}
      \EE (\Phi_m(v) \mid v) = v;
    \end{equation}
  \item Variance bound. For some constant $C_{\Phi}$ independent of $m$ and $v$, 
    \begin{equation}
      \EE (\norm{\Phi_m(v) - v}_2^2 \mid v) \leq C_{\Phi} \frac{\norm{v}_1^2}{m};
    \end{equation}
  \item Expectation $1$-norm bound
    \begin{equation}
      \EE (\norm{\Phi_m(v)}_1 \mid v) = \norm{v}_1.
    \end{equation}
  \end{enumerate}
\end{assumption}

The compression function $\Phi_m$ introduced in \cite{LimWeare:17} is as
follows. For a given vector $v\in \mathbb{R}^N$, first we sort the
entries as
$\abs{v(q_1)} \geq \abs{v(q_2)} \geq \cdots \geq \abs{v(q_N)}$, where
$q: [N] \to [N]$ is a permutation. The compression function consists
of two parts.  In the first part, large components of the vector are
preserved exactly. Define
\begin{equation*}
  \tau = \max_{ 1\leq i\leq N} \left\{ i : |v(q_i)| \geq \frac {\sum_{j=i}^N |v(q_j)|}{m+1-i} \right\},
\end{equation*}
with the convention $\max \{\emptyset\} = 0$, so $0\leq \tau \leq m$.
The compression function keeps the entries $v(q_i)$ for any
$1 \leq i \leq \tau$,
\begin{equation*}
  \bigl(\Phi_m(v)\bigr)(q_i) = v(q_i), \qquad \forall i \leq \tau.
\end{equation*}
Note that if $\norm{v}_0 \leq m$, all components are `large' and
$\Phi_m(v) = v$, the input vector is kept without compression.  The
remaining $n - \tau$ components are considered `small'. Under the
compression we only keep a few entries so the resulting vector
$\Phi_m(v)$ has about $m$ nonzero entries, as in Algorithm
\ref{alg:fri_compression}; the details are further discussed below.
\begin{table}[ht]
  \begin{algorithm}[H]
    \SetAlgoLined
    \SetKwInOut{Input}{Input}
    \SetKwInOut{Output}{Output}
    \Input{$v\in\mathbb{R}^N$, sparsity parameter $m$}
    \Output{$V = \Phi_m(v) \in\mathbb{R}^N$}
    \tcp{Part 1: Keep large components}
    $B = \{1,2,\cdots,N\}$;

    $s = \norm{v}_1$;

    $\displaystyle i' = \arg\max_{i\in B} \abs{v(i)}$;

    $\tau = 0$;

    \While{$\displaystyle \abs{v(i')} \geq \frac{s}{m-\tau}$}{
      \smallskip

      $V(i') = v(i')$; 

      $s = s - \abs{v(i')}$; 

      $\tau = \tau + 1$; 

      $B = B\backslash\{i'\}$; 

      $\displaystyle i' = \arg\max_{i\in B} \abs{v(i)}$;
    }
    \tcp{Part 2: Compress small components}

    \For{each $i\in B$}{
      Generate nonnegative random integer $\{N_i\}$, such that 
      \[\EE N_i = \frac{m-\tau}{s} \abs{v(i)}; \]

      $\displaystyle V(i) = \sgn(v(i)) N_i \frac{s}{m-\tau}$;
    }
    \caption{FRI - compression function $\Phi_m$} \label{alg:fri_compression}
  \end{algorithm}
\end{table}

In the second part of Algorithm~\ref{alg:fri_compression}, the set
$B = \{q_{\tau+1}, q_{\tau+2}, \cdots, q_N\}$ consists of the indices
of all `small' components to be compressed. Note that for the integer
random variable $N_i$, $i \in B$, only its expectation
$\EE N_i \in (0,1)$ is specified, so there is still freedom to choose
the probability distribution of $\{N_i\}_{i\in B}$.  Here we only
discuss independent Bernoulli (which is easy to
understand) and systematic sampling (which we use in the numerical
examples) approaches, while other choices are possible.  Let us focus
on the entries in $B$ and define $v' \in \mathbb{R}^n$ such that
$v'(i) = v(i) \bd{1}_{\{ i\in B \}}$. It follows that
$\norm{v'}_1 = \norm{v}_1 - \sum_{i=1}^{\tau } \abs{v(q_i)}$.
    
For the independent Bernoulli, $N_i$ is independent for
each $i \in B$ and follows the Bernoulli distribution as
\begin{equation*}
  N_i = 
  \begin{cases}
      0, & \text{w.p.} \quad  1 - \frac{|v(i)|}{\|v'\|_1 /(m - \tau)}\,; \\
      1, & \text{w.p.} \quad \frac{|v(i)|}{\|v'\|_1 /(m - \tau)}\,.
    \end{cases}
\end{equation*}
Note that the probability is well defined due to the choice of
$\tau$. The number of nonzero components of the compressed vector is
$\norm{\Phi_m(v)}_0 = \tau + \sum_{i\in B} N_i$. From the choice of
$N_i$, $\EE (\norm{\Phi_m(v)}_0 \mid v) = m$; so $m$ is the expected
sparsity of $\Phi_m(v)$.

Another choice is the systematic sampling \cite{LimWeare:17}: Take a
random variable $U$ uniformly distributed in $(0,1)$. Then for
$k=1,2,\cdots,m - \tau$, define
\begin{equation*}
  U_k = \frac{U+k-1}{m - \tau}.
\end{equation*}
Given $\{q'_1, q'_2, \cdots, q'_{N-\tau}\}$ any permutation of indices
in $B$, define
\begin{equation*}
  I_k = \max_{1\leq i\leq N-\tau} \biggl\{i: \sum_{j=1}^{i-1} |v(q'_i)| \leq U_k \|v'\|_1 < \sum_{j=1}^i |v(q'_i)|  \biggr\},
\end{equation*}
then $N_i$ is given by 
\begin{equation*}
  N_i =
  \begin{cases}
      1, & \text{if}\quad i = q'_{I_k} \ \text{for some}\ k, \\
      0, & \text{otherwise}.
    \end{cases}
\end{equation*}
Notice that by construction, the number of nonzero $N_i$s is exactly
$m - \tau$, therefore $\norm{\Phi_m(v)}_0 = m$. The $N_i$s generated
by systematic sampling is obviously correlated as only one random
number $U$ drives the generation. The two approaches will be analyzed
in the next section in the framework of inexact power iteration.
  
\subsubsection{Convergence Analysis}
    
We now apply Theorem~\ref{thm:main_conv} to analyze the convergence of
the FRI algorithm with either independent Bernoulli or
systematic sampling. Notice that we have the immediate result
\begin{lemma}
  Assumption \ref{asmp:fri_error} implies Assumptions
  \ref{asmp:F_homo} and \ref{asmp:error}.
\end{lemma}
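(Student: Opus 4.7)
The plan is simply to check each clause of Assumptions \ref{asmp:F_homo} and \ref{asmp:error} directly, substituting $F_m(A,v) = \Phi_m(Av)$ from \eqref{eq:fri_inexact} and transferring the hypotheses on $\Phi_m$ (applied to the argument $Av$) together with the elementary matrix-norm bound $\norm{Av}_1 \leq \norm{A}_1\norm{v}_1$. This is a verification lemma rather than a genuinely difficult result, so the proof will be short.

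First I would establish Assumption \ref{asmp:F_homo}: for any scalar $c$, use homogeneity of $\Phi_m$ (Assumption \ref{asmp:fri_error}(a)) to write
\begin{equation*}
F_m(A,cv) = \Phi_m(A(cv)) = \Phi_m(c\,Av) = c\,\Phi_m(Av) = c\,F_m(A,v).
\end{equation*}

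Next I would verify Assumption \ref{asmp:error}(a) (the martingale difference property). Since $v_{t-1}$ is $\mathcal{F}_{t-1}$-measurable, so is $Av_{t-1}$, and Assumption \ref{asmp:fri_error}(b) applied to $v = Av_{t-1}$ gives $\EE(\Phi_m(Av_{t-1})\mid\mathcal{F}_{t-1}) = Av_{t-1}$, hence $\EE(\xi_t\mid\mathcal{F}_{t-1}) = 0$. For Assumption \ref{asmp:error}(b), apply the variance bound \ref{asmp:fri_error}(c) to $v = Av_{t-1}$ and then use $\norm{Av_{t-1}}_1 \leq \norm{A}_1\norm{v_{t-1}}_1$ to obtain
\begin{equation*}
\EE\bigl(\norm{\xi_t}_2^2 \mid \mathcal{F}_{t-1}\bigr) \leq C_\Phi\,\frac{\norm{Av_{t-1}}_1^2}{m} \leq C_\Phi\,\frac{\norm{A}_1^2\norm{v_{t-1}}_1^2}{m},
\end{equation*}
which is the required bound with $C_e = C_\Phi$. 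Finally, for Assumption \ref{asmp:error}(c), use the expectation $1$-norm identity \ref{asmp:fri_error}(d) applied to $Av_{t-1}$ to get
\begin{equation*}
\EE\bigl(\norm{v_t}_1 \mid \mathcal{F}_{t-1}\bigr) = \EE\bigl(\norm{\Phi_m(Av_{t-1})}_1 \mid \mathcal{F}_{t-1}\bigr) = \norm{Av_{t-1}}_1 \leq \norm{A}_1\norm{v_{t-1}}_1.
\end{equation*}

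There is no real obstacle here: each of the four hypotheses on $\Phi_m$ translates directly into the corresponding requirement on $F_m$ after a single application of the submultiplicative bound $\norm{Av}_1 \leq \norm{A}_1\norm{v}_1$. The only thing worth flagging is that Assumption \ref{asmp:fri_error}(d) is stated as an equality, which is slightly stronger than Assumption \ref{asmp:error}(c); this is what allows the chain of (in)equalities above to close cleanly without invoking any additional structure of the particular compression $\Phi_m$ from Algorithm \ref{alg:fri_compression}.
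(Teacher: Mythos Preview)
Your proposal is correct. The paper itself does not actually supply a proof for this lemma, treating it as immediate; your verification is exactly the straightforward clause-by-clause check one would write out, and there is nothing to add or change.
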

Therefore it suffices to check Assumption \ref{asmp:fri_error} for the
compression function $\Phi_m$. Homogeneity is obvious. From the
construction of $\Phi_m$, the unbiasedness is guaranteed by the
expectation of $N_i$s, no matter which particular distribution is used
for $N_i$.
\begin{equation}
  \EE (\Phi_m(v) \mid v) = v.
\end{equation}
The variance bounds are proved in the following lemmas.
\begin{lemma}
  For FRI compression with either independent Bernoulli or systematic
  sampling,
  \begin{equation*}
    \EE (\norm{\Phi_m(v) - v}_2^2 \mid v) \leq \frac{\norm{v'}_1^2}{m - \tau} \leq \frac{\norm{v}_1^2}{m}.
  \end{equation*}
  Moreover, we have the almost sure bound for systematic sampling,
  \begin{equation*}
    \norm{\Phi_m(v) - v}_2^2 \leq \frac{2 \norm{v'}_1^2}{m - \tau} \leq \frac{2 \norm{v}_1^2}{m}, \quad \text{a.s.}
  \end{equation*}
\end{lemma}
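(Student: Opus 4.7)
The plan is to reduce the variance calculation to the entries of $B$ (since large entries are preserved exactly), exploit the definition of $\tau$ to show $p_i := \EE N_i = \frac{(m-\tau)\abs{v(i)}}{s} \leq 1$ for every $i\in B$, and then handle both sampling strategies in a unified way using $N_i\in\{0,1\}$.

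First I would observe that $\Phi_m(v)-v$ is supported on $B$, and for $i\in B$ the error entry equals $\frac{s}{m-\tau}\bigl(N_i - p_i\bigr)$ where $s = \norm{v'}_1$. The definition of $\tau$ (as the maximal index for which the ``keep exactly'' test holds) implies that $\abs{v(q_{\tau+1})} < \frac{\norm{v'}_1}{m-\tau}$, and by monotonicity $\abs{v(i)} < \frac{\norm{v'}_1}{m-\tau}$ for all $i\in B$; therefore $p_i < 1$. Hence in the independent Bernoulli scheme $N_i\in\{0,1\}$ by construction, and in systematic sampling the same holds because consecutive samples $U_k$ are spaced exactly by $1/(m-\tau)$ while each cell in $[0,\norm{v'}_1]$ has length $\abs{v(i)} < \norm{v'}_1/(m-\tau)$, so at most one $U_k$ lands in each cell.

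With $N_i\in\{0,1\}$ we get $\EE N_i^2 = \EE N_i = p_i$ marginally for either scheme, so
\begin{equation*}
  \EE (\norm{\Phi_m(v) - v}_2^2 \mid v) = \Bigl(\frac{s}{m-\tau}\Bigr)^2 \sum_{i\in B} \EE(N_i - p_i)^2 = \Bigl(\frac{s}{m-\tau}\Bigr)^2 \sum_{i\in B} p_i(1-p_i) \leq \frac{\norm{v'}_1^2}{m-\tau},
\end{equation*}
using $\sum_{i\in B} p_i = m - \tau$. For the almost-sure bound under systematic sampling, the total count is deterministic: $\sum_{i\in B} N_i = m-\tau$. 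Since $N_i^2 = N_i$,
\begin{equation*}
  \sum_{i\in B}(N_i - p_i)^2 = (m-\tau) - 2\sum_{i\in B} N_i p_i + \sum_{i\in B} p_i^2 \leq (m-\tau) + \sum_{i\in B} p_i^2 \leq 2(m-\tau),
\end{equation*}
where the last step uses $p_i \leq 1$ so $\sum p_i^2 \leq \sum p_i = m-\tau$. Multiplying by $(s/(m-\tau))^2$ gives the claimed $2\norm{v'}_1^2/(m-\tau)$ bound.

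Finally, for the comparison $\frac{\norm{v'}_1^2}{m-\tau} \leq \frac{\norm{v}_1^2}{m}$, I would set tail sums $T_i = \sum_{j=i}^N \abs{v(q_j)}$, so that $T_1 = \norm{v}_1$ and $T_{\tau+1} = \norm{v'}_1$. The defining inequality for $\tau$ gives $\abs{v(q_i)} \geq T_i/(m+1-i)$ for $i\leq \tau$, equivalently $T_{i+1} \leq \frac{m-i}{m+1-i}\,T_i$. Telescoping,
\begin{equation*}
  \norm{v'}_1 = T_{\tau+1} \leq T_1 \prod_{i=1}^{\tau} \frac{m-i}{m+1-i} = \frac{m-\tau}{m}\,\norm{v}_1,
\end{equation*}
so $\frac{\norm{v'}_1^2}{m-\tau} \leq \frac{(m-\tau)}{m^2}\norm{v}_1^2 \leq \frac{\norm{v}_1^2}{m}$. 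The main subtlety is verifying $p_i \leq 1$ (to guarantee $N_i\in\{0,1\}$ in the systematic scheme), since that is exactly what the threshold $\tau$ was engineered to achieve; once that is in hand both inequalities follow by elementary manipulations.
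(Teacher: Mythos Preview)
Your proof is correct and follows essentially the same approach as the paper: both restrict attention to the small-entry index set $B$, use that $N_i\in\{0,1\}$ so that $\EE N_i^2=\EE N_i$, exploit the monotonicity of the tail ratios $T_i/(m+1-i)$ for the comparison $\frac{\norm{v'}_1}{m-\tau}\leq\frac{\norm{v}_1}{m}$, and use the deterministic total $\sum_{i\in B}N_i=m-\tau$ for the almost-sure bound under systematic sampling. The only cosmetic differences are organizational---you work with the normalized probabilities $p_i$ and expand $(N_i-p_i)^2$ algebraically, whereas the paper expands $(\Phi_m(v)(q_i)-v(q_i))^2$ directly and, for the almost-sure bound, splits into the cases $N_i=0$ and $N_i\neq 0$; these are equivalent rearrangements of the same computation (indeed $\frac{s^2}{(m-\tau)^2}\sum_{i\in B} p_i^2=\norm{v'}_2^2$, so both intermediate bounds coincide).
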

It is not possible to obtain an almost sure bound as above for
independent Bernoulli, since for example it is possible that all the
Bernoulli variables are $1$, which gives large error
$\norm{\Phi_m(v)-v}_2^2 \geq
\frac{(N-2m+\tau)\norm{v'}_1^2}{(m-\tau)^2}$.
This Lemma thus implies the advantage of using the systematic sampling
strategy, which in practice gives smaller variance in general. We will only show numerical results using the systematic sampling strategy in the numerical examples later.  

\begin{proof}
  Since large components of $v$ are kept exactly by $\Phi_m(\cdot)$, we have
  \begin{equation*}
    \norm{\Phi_m(v) - v}_2^2 = \sum_{i=\tau+1}^N (\Phi_m(v)(q_i) - v(q_i))^2 
    = \sum_{i=\tau+1}^N \bigl( (\Phi_m(v)(q_i))^2 + v(q_i)^2 - 2\Phi_m(v)(q_i)v (q_i) \bigr). 
  \end{equation*}
  Take the expectation
  \begin{equation*}
  	\EE \bigl(\norm{\Phi_m(v)-v}_2^2 \mid v\bigr) = \EE \biggl( \sum_{i=\tau+1}^N (\Phi_m(v)(q_i))^2 \mid v \biggr) + \sum_{i=\tau+1}^N v(q_i)^2 - 2\sum_{i=\tau+1}^N v(q_i) \EE (\Phi_m(v)(q_i) \mid v).
  \end{equation*}
  Since both independent Bernoulli and systematic sampling are unbiased,
  \begin{equation*}
  	\EE \Phi_m(v)(q_i) = v(q_i).
  \end{equation*}
  Moreover, because there are $\sum_{i=\tau+1}^N N_i$ number of $\frac{\norm{v'}_1}{m - \tau}$ and $n-\tau-\sum_{i=\tau+1}^N N_i$ number of $0$ in $\{\abs{\Phi_m(v)(q_i)}\}_{i\in B}$, we have
  \begin{equation*}
    \EE \biggl( \sum_{i=\tau+1}^N (\Phi_m(v)(q_i))^2 \mid v \biggr) = \EE\biggl(\EE \biggl( \sum_{i=\tau+1}^N (\Phi_m(v)(q_i))^2 \mid \sum_{i=\tau+1}^N N_i \biggr) \mid v \biggr)
    = \frac{\norm{v'}_1^2}{(m - \tau)^2} \EE \biggl( \sum_{i=\tau+1}^N N_i \mid v\biggr).
  \end{equation*}
  For independent Bernoulli, $\EE \bigl( \sum_{i=\tau+1}^N N_i \mid v \bigr) = m-\tau$ and for systematic sampling, $\sum_{i=\tau+1}^N N_i = m-\tau$, so
  \begin{equation*}
    \EE \biggl( \sum_{i=\tau+1}^N (\Phi_m(v)(q_i))^2 \mid v\biggr) = \frac{\norm{v'}_1^2}{m - \tau}.
  \end{equation*}
  Finally,
  \begin{equation*}
    \EE \bigl(\norm{\Phi_m(v)-v}_2^2 \mid v\bigr) = \frac{\norm{v'}_1^2}{m - \tau} - \norm{v'}_2^2  \leq \frac{\norm{v'}_1^2}{m - \tau}.
  \end{equation*}
  
  We now show that
  $\frac{\norm{v'}_1}{m - \tau} \leq \frac{\norm{v}_1}{m}$, which
  follows from the fact that $\frac {\sum_{j=i}^N |v(q_j)|}{m+1-i}$ is
  nonincreasing in $i$ for $i \leq \tau$. Indeed, recall from the
  choice of $\tau$ that for $i \leq \tau$,
  $|v(q_i)| \geq \frac{ \sum_{j=i}^N |v(q_j)|}{m+1-i}$, which is
  equivalent to
  \begin{equation*}
    \frac{\sum_{j=i}^N |v(q_j)|}{m+1-i} \geq \frac{\sum_{j=i+1}^N |v(q_j)|}{m-i}.
  \end{equation*}
  Thus, combined with $\norm{v'}_1 \leq \norm{v}_1$, we arrive at 
  \begin{equation*}
    \EE (\norm{\Phi_m(v) - v}_2^2 \mid v) \leq \frac{\norm{v'}_1^2}{m - \tau}
    \leq \frac{\norm{v}_1^2}{m}.
  \end{equation*}
  
  \medskip
  
  Next we give the almost sure bound for systematic sampling. Note that if $N_i \neq 0$ for $i \in B$, since $\bigl(\Phi_m(v)\bigr)(q_i)$ and $v(q_i)$ have the same sign, we have 
  \begin{equation*}
    (\Phi_m(v)(q_i) - v(q_i))^2 \leq \Phi_m(q_i)^2 = \frac{\norm{v'}_1^2}{(m - \tau)^2}.
  \end{equation*}
  Since there are exactly $m-\tau$ nonzero $N_i$s, we can estimate
  \begin{align*}
    \norm{\Phi_m(v) - v}_2^2 &= \sum_{i=\tau+1}^N (\bigl(\Phi_m(v)\bigr)(q_i) - v(q_i))^2 \cr
    &\leq \sum_{i = \tau + 1}^N v(q_i)^2 \bd{1}_{N_i = 0} + \bigl(\Phi_m(v)\bigr)(q_i)^2 \bd{1}_{N_i \neq 0}\cr
    &\leq \norm{v'}_2^2 + (m - \tau)\frac{\norm{v'}_1^2}{(m - \tau)^2} \cr
    &\leq \frac{\norm{v'}_1^2}{m - \tau} + \frac{\norm{v'}_1^2}{m - \tau} =  
      \frac{2\norm{v'}_1^2}{m - \tau} \leq \frac{2\norm{v}_1^2}{m}.
  \end{align*}
\end{proof}

The expectation $1$-norm bound can be easily checked from the definition. 
\begin{lemma}
  For FRI with independent Bernoulli compression,
  \begin{equation*}
    \EE (\norm{\Phi_m(v)}_1 \mid v) = \norm{v}_1.
  \end{equation*}
  For FRI with systematic sampling compression,
  \begin{equation*}
    \norm{\Phi_m(v)}_1 = \norm{v}_1, \quad \text{a.s.}
  \end{equation*}
\end{lemma}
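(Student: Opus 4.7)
The plan is to decompose the $1$-norm of $\Phi_m(v)$ into the contribution from the ``large'' components (indices $q_1,\dots,q_\tau$ that Algorithm~\ref{alg:fri_compression} keeps exactly) and the contribution from the ``small'' components (indices in $B = \{q_{\tau+1},\dots,q_N\}$ that are randomly compressed). Since $\bigl(\Phi_m(v)\bigr)(q_i) = v(q_i)$ for $i \le \tau$, and since on $B$ we have $\bigl(\Phi_m(v)\bigr)(i) = \sgn(v(i))\, N_i\, \tfrac{\|v'\|_1}{m-\tau}$ with $v'$ the restriction of $v$ to $B$, I would write
\begin{equation*}
  \norm{\Phi_m(v)}_1 \;=\; \sum_{i=1}^{\tau} \abs{v(q_i)} \;+\; \frac{\norm{v'}_1}{m-\tau} \sum_{i \in B} N_i.
\end{equation*}
Note that $\norm{v'}_1$ is determined by $v$ (hence deterministic given $v$), so the only stochastic object on the right-hand side is $\sum_{i\in B} N_i$.

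For independent Bernoulli, the $N_i$ are Bernoulli with mean $\EE N_i = \tfrac{m-\tau}{\norm{v'}_1} \abs{v(i)}$, so by linearity $\EE \sum_{i\in B} N_i = \tfrac{m-\tau}{\norm{v'}_1} \sum_{i\in B} \abs{v(i)} = m-\tau$. Plugging back gives
\begin{equation*}
  \EE (\norm{\Phi_m(v)}_1 \mid v) \;=\; \sum_{i=1}^{\tau} \abs{v(q_i)} + \norm{v'}_1 \;=\; \norm{v}_1,
\end{equation*}
where the last equality uses $\norm{v'}_1 = \sum_{i=\tau+1}^{N} \abs{v(q_i)}$ by definition.

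For systematic sampling, the construction of the $I_k$ via the uniform variable $U$ produces exactly $m-\tau$ indices (with multiplicity one each, since the probability of any index exceeds $1/(m-\tau)$ would violate the definition of $\tau$), so $\sum_{i\in B} N_i = m-\tau$ holds pointwise, not merely in expectation. The same substitution therefore gives $\norm{\Phi_m(v)}_1 = \norm{v}_1$ almost surely.

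There is no real obstacle here: the whole point is that both sampling schemes are designed so that the total mass $\tfrac{\norm{v'}_1}{m-\tau}\sum_i N_i$ reproduces $\norm{v'}_1$, in expectation for Bernoulli and almost surely for systematic. The only mild subtlety is to verify, for systematic sampling, that each $N_i \in \{0,1\}$ and that exactly $m-\tau$ of them are $1$; this is immediate from the definition of $\tau$, which guarantees $\abs{v(q_i)}/\norm{v'}_1 \le 1/(m-\tau)$ for every $i \in B$, so no single bin is hit twice by the equally spaced probes $U_k = (U+k-1)/(m-\tau)$.
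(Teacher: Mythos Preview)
Your proof is correct; the paper itself omits any argument here, remarking only that the bound ``can be easily checked from the definition,'' and your decomposition into large and small components followed by computing $\sum_{i\in B} N_i$ is precisely that straightforward check.
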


Therefore, the compression function $\Phi_m$ satisfies
Assumption~\ref{alg:fri_compression}, and thus the convergence follows
Theorem~\ref{thm:main_conv}.
  
\subsubsection{Deterministic compression by hard thresholding}

Another way to choose the compression function $\Phi_m$ is by simple
hard thresholding, which means $\Phi_m = \Phi_m^{\text{HT}}$ keeps the
$m$ largest entries (in absolute value) and drops the remaining
ones. Compared to the previously discussed approaches of compression,
the hard thresholding obviously has smaller variance since it is
deterministic, as a price to pay, it introduces bias to the inexact
matrix-vector multiplication. The bias-variance tradeoff between hard
thresholding and FRI type algorithm is similar to that between
\textit{i}FCIQMC and FCIQMC.

\section{Numerical Results}\label{sec:numerics}
In this section, we give some numerical tests of the FCIQMC and FRI
algorithms, and their variance \textit{i}FCIQMC and Hard Thresholding
to compare the performance. The numerical problem is to compute the
ground energy of a Hamiltonian $H$ for a quantum system. As discussed
before, we define $A = I - \delta H$ for $\delta$ small so the problem
is equivalent to find the largest eigenvalue of $A$.  We will test
these methods with two types of model systems: the 2D fermionic Hubbard
model and small chemical molecules under the full CI discretization.
The Hamiltonians for these have the same structure. Each electron
lives in a finite dimensional one-particle Hilbert space. The vectors
in the basis set of the one-particle Hilbert space are called
orbitals. The number of orbitals $N^{\text{orb}}$ is the dimension of
the one-particle space. We denote $N^{\text{elec}}$ the total number of
electrons in the system in total. Due to the Pauli exclusion
principle, there are at most two electrons with opposite spins in one
orbital. In our test examples, we choose the total spin
$S^{\text{tot}} = 0$.  Therefore the dimension of the space is
$\binom{N^{\text{orb}}}{N^{\text{elec}}/2}^2$, neglecting other
constraints like symmetry. The dimension grows exponentially as
$N^{\text{orb}}$ and $N^{\text{elec}}$ grows. Here we summarize the
system in our numerical tests in the Table~\ref{tb:system}:
  
\begin{table}[h]
  \centering
  \begin{tabular}{cccccc}
    \toprule
	System	 &	$N^{\text{orb}}$ &	$N^{\text{elec}}$ & 	dimension &			HF energy	& Ground energy \\ \toprule
    $4\times 4$ Hubbard &	16 & 	10 &	$1.2\times 10^6$ &	-17.7500	& -19.5809 \\
    \ce{Ne}, aug-cc-pVDZ 	& 	23 &	10 &	$1.4\times 10^8$ &	-128.4963	& -128.7114 \\
    \ce{H_2O}, cc-pVDZ 		& 	24 & 	10 & 	$4.5\times 10^8$ &  -76.0240 	& -76.2419\\
    \bottomrule
  \end{tabular}
  \caption{Test Systems} \label{tb:system}
\end{table}
  
The exact ground energy of the Hubbard model and \ce{Ne} are computed
using exact power iteration, and the ground energy of \ce{H_2O} is
from the paper~\cite{Olsen:98}.  We use \textsf{HANDE-QMC}\cite{Spencer2015}
(\url{http://www.hande.org.uk/}), an open source stochastic quantum
chemistry program written in Fortran, for FCIQMC and \textit{i}FCIQMC
calculation. FRI and HT subroutines are implemented in Fortran based
on \textsf{HANDE-QMC}. The Hamiltonian of the Hubbard model is included in the
\textsf{HANDE-QMC} package and the Hamiltonians of \ce{Ne} and
\ce{H_2O} are calculated using RHF (restricted Hartree Fock) by
\textsf{Psi4} (\url{http://www.psicode.org/}), an open source ab
initio electronic structure package.
The code to generate the entries of Hamiltonian
$H$ is the same for all four algorithms, so the comparison among
algorithms is fair in terms of computational time. The four algorithms
are tested on a computer with 6 Core Xeon CPU at 3.5GHz and 64GB RAM.

Note that our comparison is mostly for illustrative purpose and should
not be taken as benchmark tests for the various algorithms especially
for large scale calculations, which would depend on parallel
implementation, hardware infrastructure, etc. On the other hand, even
for small problems, the numerical results still  offer some
suggestions on further development of  inexact power iteration based
solvers for many-body quantum systems.

\subsection{Hubbard Model}
The Hubbard model is a standard model used in condensed matter
physics, which describes interacting particles on a lattice. In real
space, the Hubbard Hamiltonian is
\begin{equation}
  \label{eq:hubbard_real}
  H = -\sum_{\langle r,r'\rangle,\sigma} \hat{c}_{r,\sigma}^\dagger \hat{c}_{r',\sigma} + U\sum_r \hat{n}_{r\uparrow}\hat{n}_{r\downarrow},
\end{equation}
where we have scale the hopping parameter to be $1$ and so the on-site
repulsion parameter $U$ gives the ratio of interaction strength
relative to the kinetic energy. We choose an intermediate interaction
strength $U=4$ in our test.

In the $d$ dimensional Hubbard Hamiltonian \eqref{eq:hubbard_real}, $r$ is a $d$-dimensional vector
representing a site in the lattice, $\langle r,r'\rangle$ means $r$
and $r'$ are the nearest neighbor, and $\sigma$ takes values of
$\uparrow$ and $\downarrow$, which is the spin of the
electron. $\hat{c}_{r,\sigma}$ and $\hat{c}_{r,\sigma}^\dagger$ are
the annihilation and creation operator of electrons at site $r$ with
spin $\sigma$. They satisfy the commutation relations
\begin{equation*}
  \{\hat{c}_{r,\sigma}, \hat{c}_{r',\sigma'}^\dagger \} = \delta_{r,r'}\delta_{\sigma, \sigma'}, \qquad 
  \{\hat{c}_{r,\sigma}, \hat{c}_{r',\sigma'} \} = 0, \quad \text{and} \quad 
  \{\hat{c}_{r,\sigma}^\dagger, \hat{c}_{r',\sigma'}^\dagger \} = 0, 
\end{equation*}
where $\{A, B\} = AB + BA$ is the anti-commutator.
$\hat{n}_{r,\sigma}$ is the number operator and defined as
$\hat{n}_{r,\sigma} = \hat{c}_{r,\sigma}^\dagger \hat{c}_{r,\sigma}$.
We will consider Hubbard model on a finite 2D lattice with periodic
boundary condition.

When the interaction strength $U$ is small, it is better to work in
the momentum space instead of the real space, since the planewaves are
the eigenfunctions of the kinetic part of the Hamiltonian. The
annihilation operator in momentum space is
$\hat{c}_{k,\sigma} = \frac{1}{\sqrt{N^{\text{orb}}}} \sum_r
e^{ik\cdot r} \hat{c}_{r,\sigma}$,
where $k=(k_1,k_2)$ is the wave number and $N^{\text{orb}}$ is the total number
of orbitals or sites.  The Hubbard Hamiltonian in momentum space is
then
\begin{equation}
  \label{eq:hubbard_k}
  H = \sum_{k,\sigma} \varepsilon(k)n_{k,\sigma} + \frac{U}{N^{\text{orb}}} \sum_{k,p,q}c_{p-q,\uparrow}^\dagger c_{k+q,\downarrow}^\dagger c_{k,\downarrow}c_{p,\uparrow},
\end{equation}
where $\varepsilon(k) = -2\sum_{i=1}^2 \cos(k_i)$.

Written as a matrix, the Hubbard Hamiltonian in the momentum space is
just a real symmetric matrix with diagonal entries $\varepsilon(k)$
and off-diagonal either $0$ or $\pm\frac{U}{N^{\text{orb}}}$.  For
inexact power iteration, we take $A = I - \delta H$ with
$\delta = 0.01$.  In our numerical test, we will use the projected
energy estimator for the smallest eigenvalue of $H$; the projected
vector $v_*$ is chosen to be the Hartree-Fock state. The initial
iteration of all methods is also chosen as the Hartree-Fock state (a
vector whose only nonzero entry is at the Slater determinant corresponding to
the Hartree-Fock ground state of the system).
   
Figure~\ref{fig:convergence} plots the error of projected energy of
each iteration versus wall-clock time (first $1500$ seconds) for a
typical realization.  The error is defined as the difference between
the projected energy estimate and the exact ground energy. The
complexity parameters of the algorithms are shown in
Table~\ref{tb:4410hubbard}, which are chosen such that FRI and FCIQMC
use about the same amount of memory (e.g., the particle number in
FCIQMC is roughly equal to the non-zero entries of the matrix-vector
product in FRI or HT before compression), and also chosen so large
that all the algorithms converge. The time per iteration listed in
Table~\ref{tb:4410hubbard} is averaged over several realizations and
is used in the Figure~\ref{fig:convergence}.
\begin{figure}[h]
  \centering
  \includegraphics[width=0.9\textwidth]{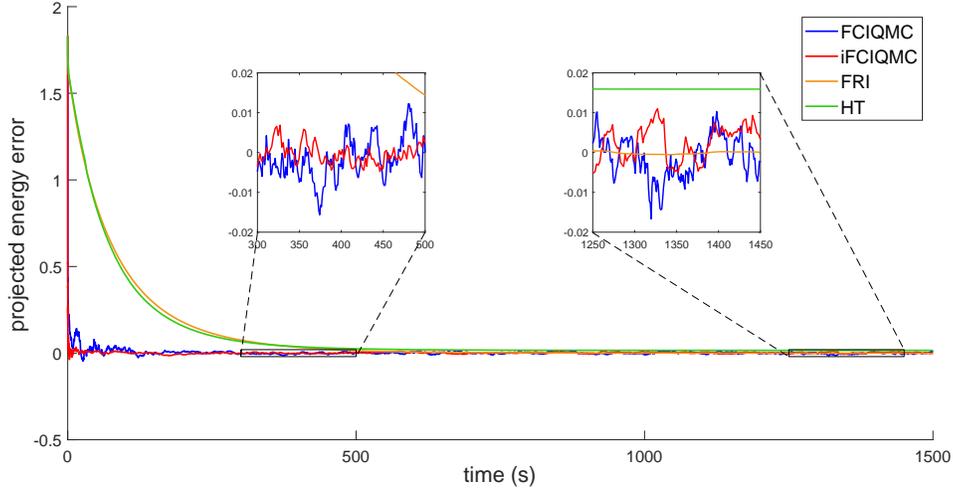}
  \caption{Convergence of the projected energy with respect to time
    for System 1, a $4\times 4$ Hubbard model with $10$ electrons, $5$
    spin up and $5$ spin down, and interaction strength $U=4$.}
  \label{fig:convergence}
\end{figure}

{\small
\begin{table}[h] 
  \centering
  \begin{tabular}{cccccccccc}
    \toprule
    & $m$				& $\norm{Av_t}_0$ 	& avg. error								& std. 					& MSE 					& $\tau_{\text{auto}}$		& compr. error			& time/iter.(s) \\
    \toprule
    FCIQMC	& $1.7\times 10^6$	& - 				& $4.4\times 10^{-4}$				& $3.0\times 10^{-4}$	& $2.6\times 10^{-7}$	& $14.1$ 			& $4.6\times 10^{-2}$	& $1.1$ \\
    \textit{i}FCIQMC & $1.7\times 10^6$	& -			& $3.2\times 10^{-4}$				& $2.2\times 10^{-4}$	& $1.8\times 10^{-7}$ 	& $13.8$			& $2.6\times 10^{-2}$	& $0.91$ \\
    FRI		& $3.0\times 10^4$	& $9.4\times 10^5$	& $1.2\times 10^{-4}$				& $6.1\times 10^{-5}$ 	 & $2.8\times 10^{-8}$	& $13.7$ 			& $1.6\times 10^{-1}$	& $3.6$ \\
    HT		& $3.0\times 10^4$	& $7.2\times 10^5$	& $1.6\times 10^{-2}$				& -					& $2.5\times 10^{-4}$		& - 			& $4.5\times 10^{-3}$	& $3.5$ \\
    \bottomrule
  \end{tabular}
  \caption{Parameters and numerical results for  $4\times 4$ Hubbard Model with $10$ electrons, $5$ spin up and $5$ spin down, and interaction strength $U=4$.} \label{tb:4410hubbard}
  \end{table}}

As shown in Figure~\ref{fig:convergence}, all four algorithms converge
to result close to the exact eigenvalue and the estimated value from
each iteration stays around the eigenvalue for a long time. FCIQMC and
\textit{i}FCIQMC take much less time to converge, thanks to their much
lower-cost inexact matrix-vector multiplication compared to FRI and
HT, but the variance is also larger. In terms of iteration number, the
convergence of the four algorithms is similar, which can be understood
from our analysis since it is the same eigenvalue gap of the
Hamiltonian that drives the convergence. As we mentioned already, per
iteration, the FCIQMC and \textit{i}FCIQMC is much cheaper in
comparison. The reason is that FRI and HT need to access all nonzero
elements of $A$ for each column associated with a non-zero entry in
the current iterate (for multiplying $A$ with the sparse vector),
while FCIQMC and \textit{i}FCIQMC just need to randomly pick some,
without accessing the others. The number of non-zero entries per row
is large and accessing elements of $A$ is quite expensive for FCI type
problems. More quantitatively, we see in Table~\ref{tb:4410hubbard}
that for a sparse vector of $3\times 10^4$ non-zero entries in FRI,
after multiplication by $A$ before compression, the number of non-zero
entries increases to roughly $10^6$. Thus for this problem, on
average, each column has about $40$ nonzero entries that FRI needs to
access, while FCIQMC algorithm only needs access of few entries after
the random choice.
   
After convergence, the projected energy of FCIQMC and \textit{i}FCIQMC
fluctuate around the exact ground state energy. Although
\textit{i}FCIQMC is biased, the bias is not large for the current
problem, while the variance is smaller than FCIQMC. So
\textit{i}FCIQMC is an effective strategy for bias-variance trade-off.
The projected energy of FRI also varies around the true energy, and
the variance is much smaller than FCIQMC or \textit{i}FCIQMC. HT is
deterministic and the projected energy shows no variance. However the
bias is also quite visible.
   
We can average the projected energy over the path to get a better
estimate. The variance of the estimator will decay to zero as we
include longer time period in the average.  Thus, due to unbiasedness,
the error of FCIQMC and FRI can be made smaller if we run for long
enough.
In Table~\ref{tb:4410hubbard}, we give more quantitative comparison of the results of the algorithms. 
  The quantities in the table are defined as below 
\begin{align*}  
  & \text{avg. error} 	&& \frac{1}{w} \sum_{i=i_0}^{i_0+w-1} \abs{E_i - E^{\text{true}}}	\\
  & \text{std.} 		&& \sqrt{\frac{1}{w-1} \sum_{i=i_0}^{i_0+w-1} \Bigl(E_i - \frac{1}{w} \sum_{j=i_0}^{i_0+w-1} E_i\Bigr)^2} \sqrt{\frac{1+2\tau_{\text{auto}}}{W}} \\
  & \text{MSE}	 	&& \text{avg. error}^2 + \text{std.}^2 \\
  & \tau_{\text{auto}}	&& \sum_{t=1}^{w-1} \frac{\frac{1}{w-1} \sum_{i=i_0}^{i_0+w-t-1} \Bigl(E_i - \frac{1}{w} \sum_{j=i_0}^{i_0+w-1} E_i\Bigr)\Bigl(E_{i+t} - \frac{1}{w} \sum_{j=i_0}^{i_0+w-1} E_i\Bigr)}{\frac{1}{w-1} \sum_{i=i_0}^{i_0+w-1} \Bigl(E_i - \frac{1}{w} \sum_{j=i_0}^{i_0+w-1} E_i\Bigr)^2}\\
  & \text{compr. error} && \frac{1}{w} \sum_{i=i_0}^{i_0+w-1} \frac{\norm{\xi_{t+1}}_2}{\norm{Av_t}_2}	\\
\end{align*}
Here $E^{\text{true}}$ is the true ground energy obtained by exact
power iteration, $i_0$ is a burn-in parameter and $w$ is the window
size of the average. For FCIQMC and \textit{i}FCIQMC, $w = 1600$ and
$i_0 = 2400$. For FRI and HT, $w = 400$ and $i_0 = 600$. The numerical
tests show that the quantities above are insensitive to the choice of
$w$ and $i_0$, as long as the algorithms indeed converge after $i_0$
steps and the window size $w$ is not too small. $\tau_{\text{auto}}$
is the integrated autocorrelation time and $W$ is the number of
iterations averaged.  The std.{} is short for the standard deviation
of the sample mean $\bar{E}^{(W)}$ defined as
$\bar{E}^{(W)} = \frac{1}{W} \sum_{i = i_0}^{i_0+W-1} E_i$. Since the
time cost per iteration of different algorithms is quite different, to
make a fair comparison, we take
$W = \frac{10000}{\text{time per iter.}}$ for each algorithm. It gives
the standard error of the sample mean if we run each algorithm for
$10000$ seconds after convergence. The mean square error (MSE) is
simply defined to incorporate the variance and bias together.
  
\begin{figure}[h]
  \begin{subfigure}{0.45\textwidth}
    \includegraphics[width=\textwidth]{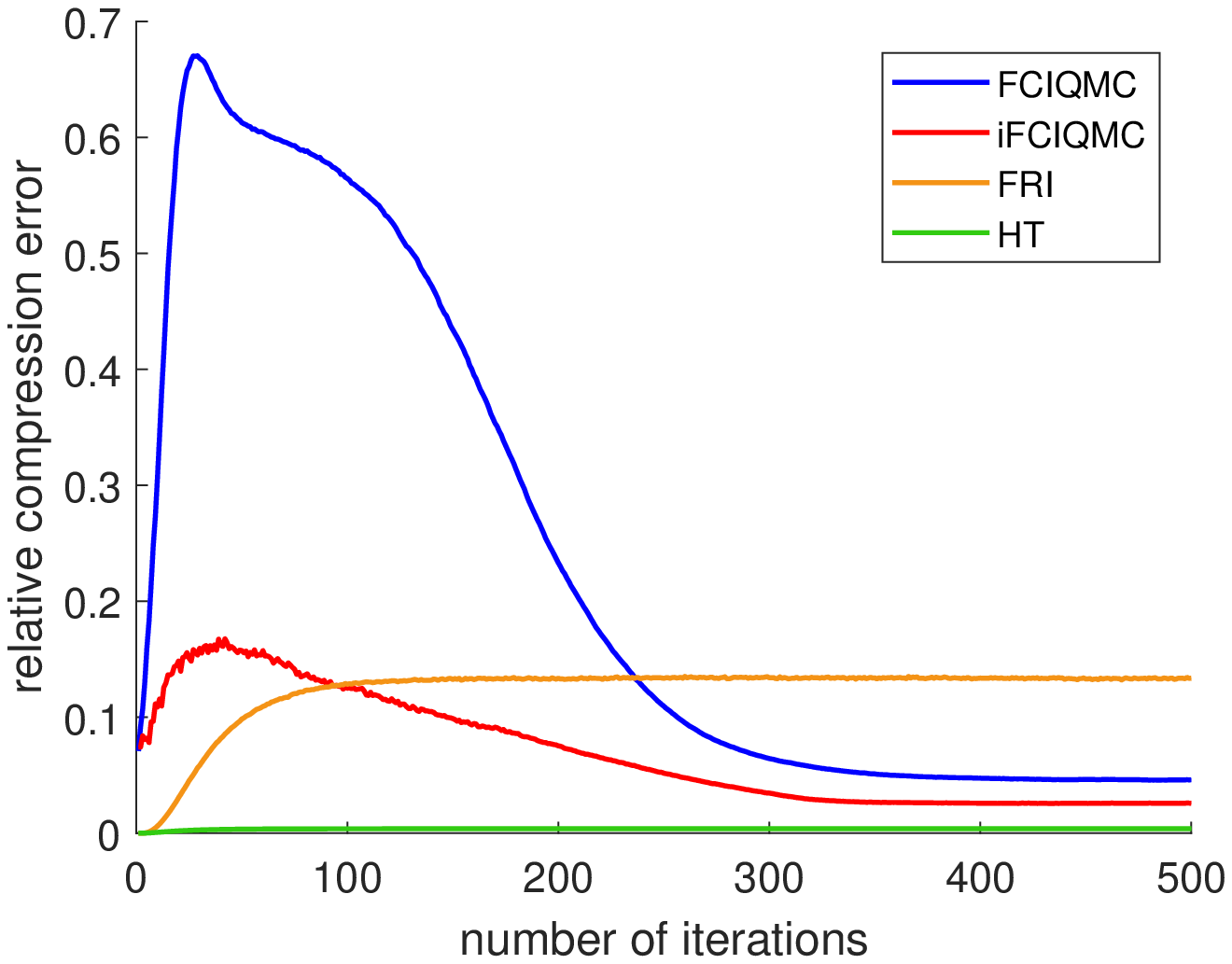}
  \end{subfigure}
  \begin{subfigure}{0.45\textwidth}
      \includegraphics[width=\textwidth]{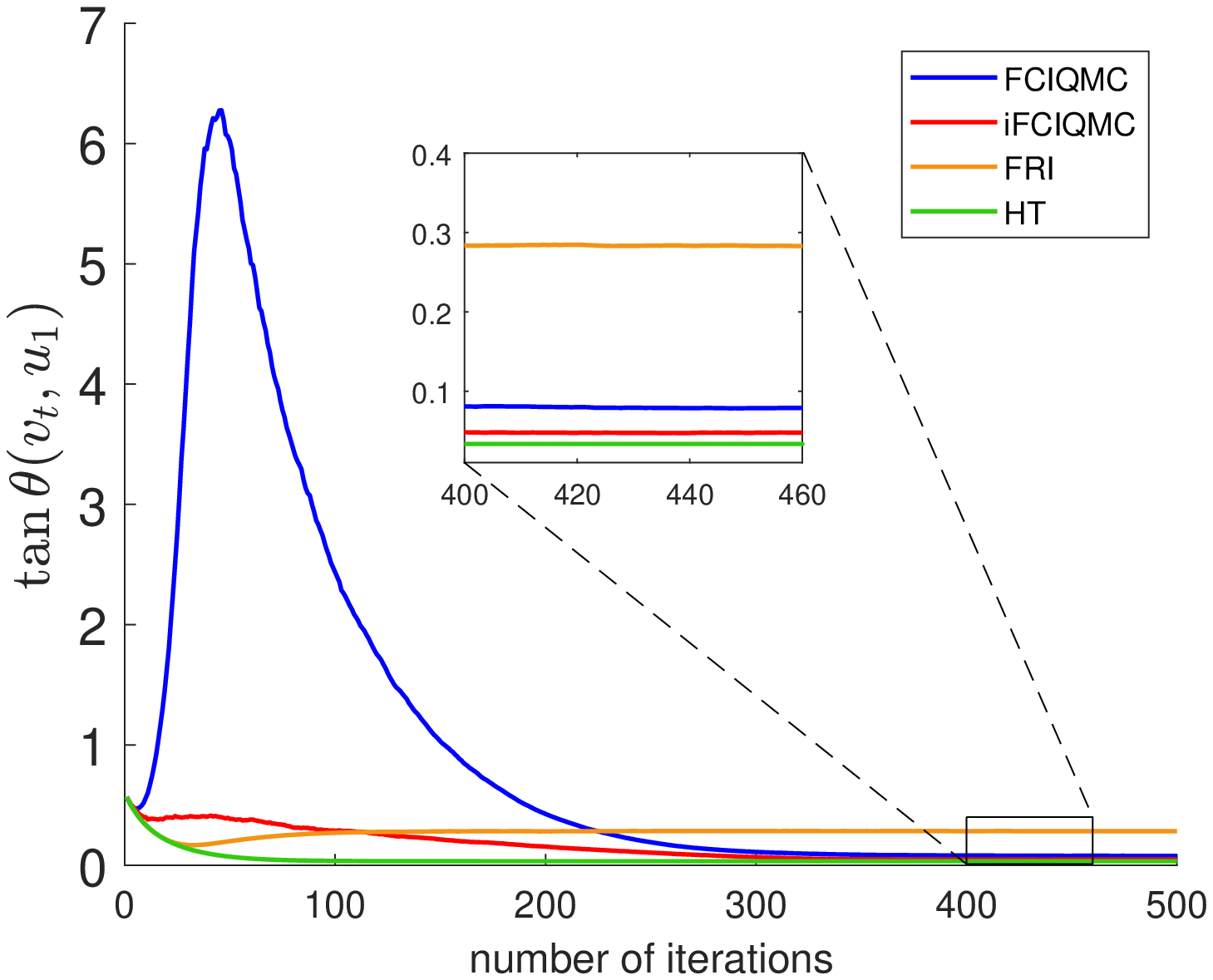}
  \end{subfigure}
  \caption{$4\times 4$ Hubbard model. (left) Relative compression
    error $\norm{\xi_{t+1}}_2 / \norm{Av_t}_2$ as a function of iteration steps; (right) Angle between the
    iterate and the exact ground state $\tan\theta(v_t,
    u_1)$ \label{fig:tot2}}
\end{figure}

To further obtain insights of the interplay between the error per step
of inexact power iteration and the convergence, we plot in
Figure~\ref{fig:tot2} the relative compression error and the tangent
of the angle between $v_t$ and the exact eigenvector $u_1$. We observe
that that FRI and HT reach convergence after about $100$ steps and
FCIQMC and \textit{i}FCIQMC converge after about $350$ steps; the more
steps of FCIQMC and \textit{i}FCIQMC are related with the first phase
of the algorithm where the particle number is exponentially
growing. This can be seen from Figure~\ref{fig:tot2}(left) as
the huge error growth of the initial stage of the iterations. Only
when the particle number reaches a certain level, the compression
error becomes small and the power iteration convergence kicks in.

After convergence, FRI has the largest compression error and HT has
the smallest. The compression error of \textit{i}FCIQMC is also
smaller than the one of FCIQMC. It is reasonable since HT and
\textit{i}FCIQMC reduce variance and thus compression error compared
with the fully stochastic FRI and FCIQMC. As shown in
Figure~\ref{fig:tot2}, in this example with the parameter choice,
FCIQMC has smaller compression error than FRI; and the larger the
compression error is, the further $v_t$ is away from the true
eigenvector $u_1$. This agrees with the theoretical results we obtain
in Theorem~\ref{thm:main_conv}, because $\tan\theta(v_t,u_1)$ is
controlled by the error $\xi_t$ at each step.

We remark that the $\tan\theta(v_t, u_1)$ error measure does not
directly translate to the error of the projected energy estimator
using say the Hartree-Fock state. In fact, we observe in
Figure~\ref{fig:convergence} and Table~\ref{tb:4410hubbard} that per
iteration, the projected energy estimated by FRI is smaller than
FCIQMC and \textit{i}FCIQMC. As an explanation, in our parameter
regime, the exact ground state has a large overlap with the
Hartree-Fock state, so in FRI, that component is kept unchanged in the
compression, while for FCIQMC and \textit{i}FCIQMC, the stochastic
error is more uniformly distributed over all the entries. This
behavior seems more problem dependent though, as we will see in the
chemical molecular examples that the MSE of FRI become comparable with
FCIQMC.

\subsection{Molecules}
We also tested the four algorithms for some molecule examples. The FCI
Hamiltonian is obtained by a Hartree-Fock calculations in a chosen
chemical basis (for single-particle Hilbert space), such as
cc-PVDZ. We choose \ce{Ne} and \ce{H_2O} at equilibrium geometry as
examples, which is described in Table~\ref{tb:system}. The time step
is taken as $\delta = 0.01$.
  
The convergence of projected energy error versus wall-clock time is
shown in Figure~\ref{fig:ne} and Figure~\ref{fig:h2o}
respectively. The parameter choice of the algorithms and more
quantitative comparison are shown in Table~\ref{tb:ne} and
Table~\ref{tb:h2o}. The four algorithms also work well for molecule
systems. The convergence behavior is similar to the Hubbard case.

The complexity parameter $m$ needed to achieve convergence depends on
the system. The ratio $m/N$ of \ce{Ne} is smaller than \ce{H_2O}. The
time cost of FRI and HT is much larger than FCIQMC and
\textit{i}FCIQMC, because they require the exact matrix-vector
multiplication $Av_t$, which is still expensive although $v_t$ is
sparse. Unlike the Hubbard case where FRI gives much smaller error,
the MSE of FRI is similar to FCIQMC and \textit{i}FCIQMC in these cases.
\begin{figure}[h]
  \centering
  \includegraphics[width=0.95\textwidth]{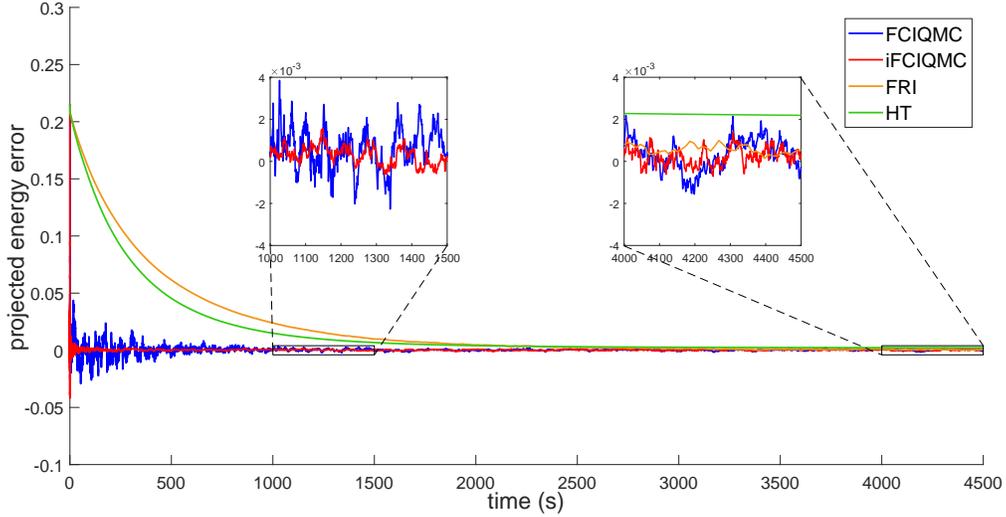}
  \caption{Convergence of the projected energy with respect to time
    for \ce{Ne} in aug-cc-pVDZ basis}
  \label{fig:ne}
\end{figure}

\begin{table}[h]
  \centering
  \begin{tabular}{cccccccc}
    \toprule
    & $m$				& $\norm{Av_t}_0$ 	& avg. error 		& std. 					& MSE 								& $\tau_{\text{auto}}$		&  time/iter.(s)	\\
    \toprule
    FCIQMC	& $1.8\times 10^6$	& - 				& $1.1\times 10^{-4}$	& $3.8\times 10^{-5}$ 	& $1.8\times 10^{-8}$	& $12.4$			& $1.5$					\\
    \textit{i}FCIQMC	& $1.8\times 10^6$ 	& - 	& $7.7\times 10^{-5}$	& $2.4\times 10^{-5}$	& $9.6\times 10^{-9}$	& $15.3$	& $1.1$					\\
    FRI		& $1.0\times 10^4$	& $7.9\times 10^6$	& $8.0\times 10^{-5}$	& $4.8\times 10^{-5}$	& $1.1\times 10^{-8}$	& $12.3$	& $11.6$				\\
    HT 		& $1.0\times 10^4$	& $3.3\times 10^6$	& $1.7\times 10^{-3}$	& -					& $2.8\times 10^{-6}$		& -	& $7.9$					\\
    \bottomrule
  \end{tabular}
  \caption{Comparison of algorithms for \ce{Ne} in aug-cc-pVDZ basis} \label{tb:ne}
\end{table}

\begin{figure}[h]
  \centering
  \includegraphics[width=0.95\textwidth]{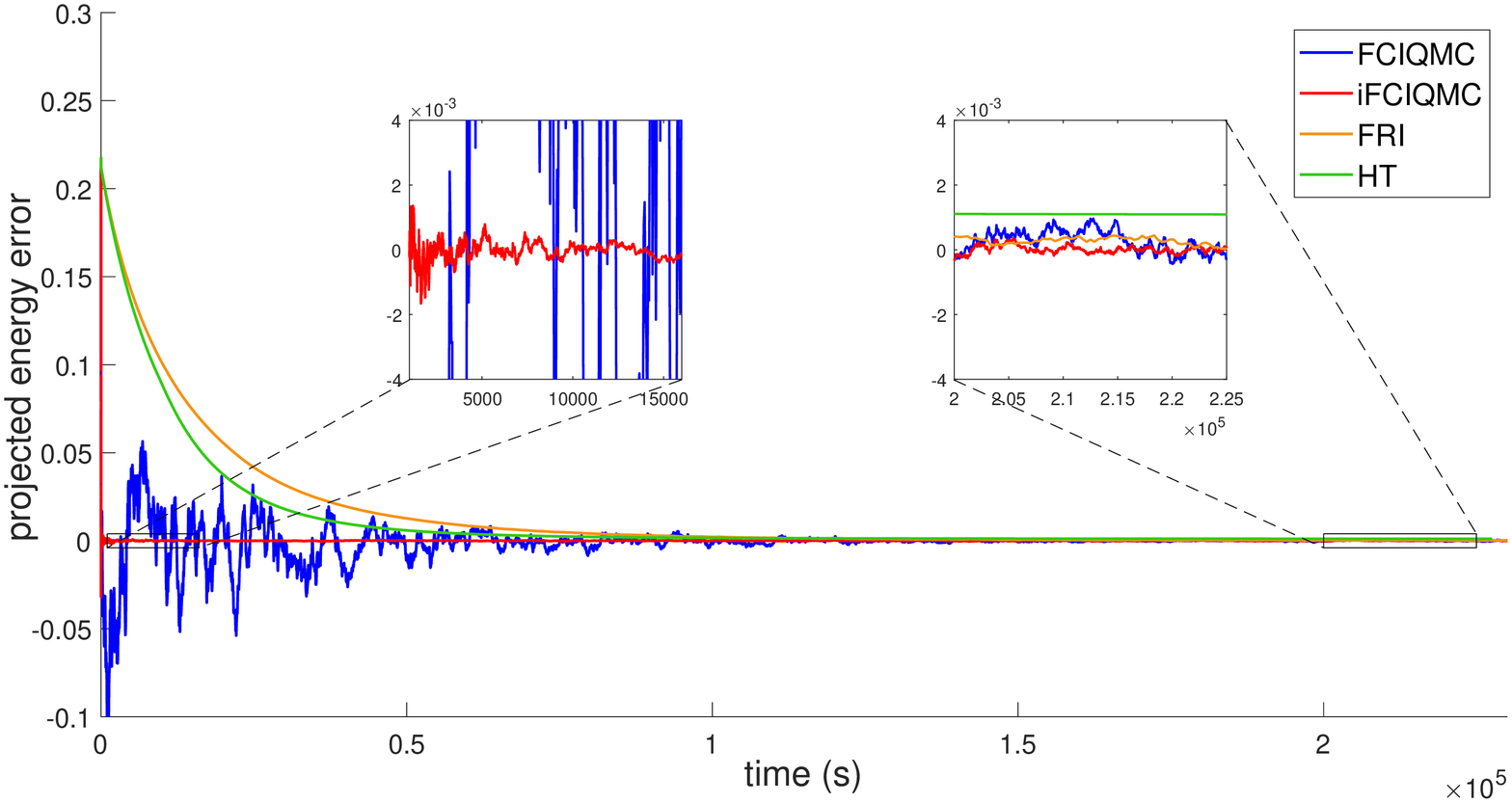}
  \caption{Convergence of the projected energy with respect to time
    for \ce{H_2O} in cc-pVDZ basis}
  \label{fig:h2o}
\end{figure}

\begin{table}[h]
  \centering
  \begin{tabular}{cccccccc}
    \toprule
    & $m$				& $\norm{Av_t}_0$ 	& avg. error 					& std.					& MSE 					& $\tau_{\text{auto}}$	&  time/iter.(s)	\\
    \toprule
    FCIQMC	& $6.0\times 10^7$	& - 				& $4.1\times 10^{-5}$	& $1.7\times 10^{-4}$	& $2.1\times 10^{-9}$ 	& $25.6$	& $54.1$	\\
    \textit{i}FCIQMC & $6.0\times 10^7$	& -			& $1.4\times 10^{-5}$ 	& $5.3\times 10^{-5}$	& $2.7\times 10^{-10}$ 	& $119$	& $36.6$	\\
    FRI 	& $1.2\times 10^5$	& $1.6\times 10^8$	& $2.2\times 10^{-5}$	& $1.2\times 10^{-4}$	& $8.9\times 10^{-10}$	 & $12.8$	& $379.3$	 \\
    HT 		& $1.2\times 10^5$	& $3.4\times 10^7$	& $1.1\times 10^{-3}$	& -					& $1.2\times 10^{-6}$	 & - 	& $227.0$	  \\
    \bottomrule
  \end{tabular}
  \caption{Comparison of algorithms for \ce{H_2O} in cc-pVDZ basis} \label{tb:h2o}
\end{table}

In summary, the numerical examples show that the FCIQMC, FRI and their
variants can achieve convergence using much less memory and
computational time compared to the standard power iteration. The
stochastic algorithms FCIQMC, iFCIQMC and FRI give better estimates
than the deterministic method HT in general.  The numerical test also
points out directions to further improve these inexact power
iterations, including variance and memory cost reduction of the
inexact matrix-vector multiplication and efficient parallel implementation to overcome the memory bottleneck. These will be leaved for future works. 

\bibliographystyle{amsxport}
\bibliography{qmc}

\end{document}